\newtheorem{lemma}{Lemma}
\newtheorem{prop}{Proposition}
\newtheorem{theorem}{Theorem}
\newtheorem*{theorem*}{Theorem}
\newcommand{\rar}{\rightarrow}
\newcommand{\lar}{\leftarrow}
\DeclareMathOperator\mat{M}
\DeclareMathOperator\elin{E}
\DeclareMathOperator\stlin{St}
\DeclareMathOperator\klin{K}
\DeclareMathOperator\stmap{st}
\DeclareMathOperator\glin{GL}
\DeclareMathOperator\diag{D}
\DeclareMathOperator\Jac{J}
\newcommand{\leqt}{\trianglelefteq}
\DeclareMathOperator{\Ad}{Ad}
\newcommand{\up}[2]{{^{#1}\!{#2}}}
\newcommand{\Set}{\mathbf{Set}}
\newcommand{\Group}{\mathbf{Grp}}
\DeclareMathOperator{\Pro}{Pro}
\title{Centrality of \(K_2\)-functor revisited}
\author{
  Egor Voronetsky
  \thanks{Research is supported by the Russian Science Foundation grant 19-71-30002.} \\
  Chebyshev Laboratory, \\
  St. Petersburg State University, \\
  14th Line V.O., 29B, \\
  Saint Petersburg 199178 Russia \\
}
\begin{document}
\maketitle

\begin{abstract}
We prove that \(\stlin(n, A)\) is a crossed module over \(\glin(n, A)\) under a local stable rank condition on an algebra \(A\) over a commutative ring. Our proof uses only elementary localization techniques in terms of pro-groups and stability results for \(\klin_1\) and \(\klin_2\). We also prove similar result for the Steinberg group associated with any sufficiently isotropic general linear group constructed by a quasi-finite algebra.
\end{abstract}

\section{Introduction}

In \cite{AnotherPresentation} W. van der Kallen proved that the Steinberg group \(\stlin(n, K)\) is a crossed module over \(\glin(n, K)\) for any commutative ring \(K\) and for any \(n \geq 4\). Together with the fact that \(\stlin(n, A)\) is centrally closed for any associative ring \(A\) and for \(n \geq 5\) this gives explicit description of the Schur multiplier of the elementary group \(\elin(n, K)\) for \(n \geq 5\).

Later M.\,S. Tulenbaev extended this result to all finite \(K\)-algebras, see \cite{Tulenbaev}. Both these proofs use the so-called ``another presentation'' of the Steinberg group in terms of arbitrary transvections instead of the elementary ones. Recently in \cite{CentralityC, CentralityD, CentralityE} S. Sinchuk and A. Lavrenov proved centrality of \(\klin_2\)-functors for Chevalley groups of types \(C_l\), \(D_l\), and \(E_l\) using similar methods and the linear case. In 1998 A. Bak and G. Tang announced centrality for the even unitary case, but their proof was not published.

In the local case much more is known. From surjective stability for \(\klin_2\) (see \cite{LinStabDennis}) it follows that \(\klin_2(n, A)\) is central if \(n \geq \mathrm{sr}(A) + 2\). In particular, \(\klin_2(n, A)\) is central for all \(n \geq 3\) if \(A\) is semi-local. In \cite{Stavrova} A. Stavrova proved centrality for all isotropic reductive groups over local rings (with isotropic rank at least \(2\)).

In \cite{PetrovStavrova} V. Petrov and A. Stavrova defined isotropic reductive groups and proved that the elementary subgroup is normal is such groups. For non-commutative rings we may similarly define an isotropic general linear group as \(\glin(R) = R^*\) if \(R\) contains a complete family of orthogonal idempotents satisfying an additional condition (``Morita equivalence'', i.e. every idempotent is full). These groups are more general than the usual matrix groups with \(R = \mat(n, A)\), but they still have elementary subgroups and Steinberg groups. It seems that it is easy to generalize some fundamental results about matrix groups to the isotropic case: the elementary subgroup is normal in \(\glin(R)\), there is a description of subgroups of \(\glin(R)\) normalized by \(\elin(R)\) in terms of ideals of \(R\), and the Steinberg group is universally closed (see, for example, \cite{HahnOMeara} for the matrix case). Usual proofs of these facts do not really use the whole structure of a matrix ring.

On the other hand, it is unclear even how to formulate ``another presentation'' in the isotropic case for non-commutative rings. In this paper we give a proof that the Steinberg group is indeed a crossed module. Our method uses an action of the local general linear group \(\glin(S^{-1} R)\) on the Steinberg pro-group \(\stlin(R)^{(\infty)}\), where \(S\) is some commutative multiplicative subset. This pro-group is the formal projective limit of ``homotopes'' \(\stlin(R)^{(s)}\) for all \(s \in S\). Our main result is
\begin{theorem*}
Let \(K\) be a commutative ring, \(R\) be a unital \(K\)-algebra with a complete family of Morita equivalent orthogonal idempotents \(e_1, \ldots, e_n\) for \(n \geq 4\). Suppose that either \(R = \mat(n, A)\) for an algebra \(A\) with \(\mathrm{lsr}(A) \leq n - 2\) (and \(e_i = e_{ii}\) are the matrix units) or \(R\) is quasi-finite. Then there is unique action of \(\glin(R)\) on \(\stlin(R)\) making \(\stmap \colon \stlin(R) \rar \glin(R)\) a crossed module.
\end{theorem*}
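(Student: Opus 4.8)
The plan is to isolate the genuinely hard input — the existence of the action — by first disposing of the two crossed-module axioms and of uniqueness. Writing \(\partial = \stmap\) and \(\klin_2(R) = \ker\partial\), the Peiffer identity \(\partial(x)\cdot y = xyx^{-1}\) forces the action of the image \(\elin(R) = \partial(\stlin(R))\) to be conjugation; this is consistent exactly when \(\klin_2(R)\) is central in \(\stlin(R)\), so I take centrality of \(\klin_2\) as the first milestone. Granting an action at all, uniqueness is then formal: if \(*\) and \(\star\) are two crossed-module actions with the same boundary, equivariance gives \(\partial(g*x) = \partial(g\star x)\), so \(\phi_g(x) = (g*x)(g\star x)^{-1}\) lands in the central subgroup \(\klin_2(R)\); a direct computation using centrality shows that each \(\phi_g\) is a homomorphism from \(\stlin(R)\) to an abelian group, and since \(\stlin(R)\) is perfect (for \(n \ge 3\) its standard generators are commutators), every such homomorphism is trivial. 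Thus the whole theorem reduces to producing a single equivariant action of \(\glin(R)\) on \(\stlin(R)\) that covers conjugation, together with centrality of \(\klin_2(R)\).

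Next I would localize at the commutative ground ring \(K\). For a maximal ideal \(\mathfrak m\) the two hypotheses play interchangeable roles: if \(R = \mat(n,A)\) with \(\mathrm{lsr}(A)\le n-2\) then \(\mathrm{sr}(A_{\mathfrak m}) \le n-2\), so surjective stability for \(\klin_2\) (the cited result of \cite{LinStabDennis}) yields centrality of \(\klin_2(n, A_{\mathfrak m})\) because \(n \ge \mathrm{sr}+2\); if instead \(R\) is quasi-finite then each \(R_{\mathfrak m}\) is module-finite over the local ring \(K_{\mathfrak m}\), hence semilocal, where \(\klin_2\) is again central. In either situation I obtain not only local centrality of \(\klin_2\) but, by the classical semilocal / small-stable-rank theory, a local crossed-module structure: \(\glin(R_{\mathfrak m})\) already acts on \(\stlin(R_{\mathfrak m})\) covering conjugation. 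The problem is therefore reduced to globalizing a collection of local actions.

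The globalization is carried out through the Steinberg pro-group \(\stlin(R)^{(\infty)} = \varprojlim_{s\in S}\stlin(R)^{(s)}\) attached to a central multiplicative subset \(S \subseteq K\), where \(\stlin(R)^{(s)}\) is the Steinberg group of the homotope \(R^{(s)}\) (the \(K\)-algebra on \(R\) with multiplication \(a\cdot_s b = asb\)) and the transition maps are induced by the homomorphisms \(R^{(st)}\to R^{(s)}\), \(a\mapsto ta\). This pro-group is the correct device because its ``generic'' behaviour is governed by \(\stlin(S^{-1}R)\), to which the local theory of the previous paragraph applies, while it remains tied to the integral ring \(R\) through the structure maps \(R^{(s)}\to R\), \(a \mapsto sa\). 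Concretely I would construct an action of \(\glin(S^{-1}R)\) on \(\stlin(R)^{(\infty)}\) on generators, by transporting transvections, and verify the Steinberg relations; the key simplification is that each relation may be checked after a further localization, where centrality of \(\klin_2\) and the local crossed-module structure make the verification routine, and the pro-group formalism is exactly what records the compatibility of these local checks along the system.

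Finally I would descend the action along the localization homomorphism \(\glin(R)\to\glin(S^{-1}R)\) to an action of \(\glin(R)\) on \(\stlin(R)\) itself, using the canonical comparison between \(\stlin(R)\) and \(\stlin(R)^{(\infty)}\). Once the descended action is in hand, equivariance is immediate from the generatorwise definition (it covers conjugation on \(\glin\)) and the Peiffer identity, being an equality inside \(\stlin(R)\), can be verified after localization; together with the uniqueness of the first paragraph this produces the asserted unique crossed module, and centrality of \(\klin_2(R)\) drops out as the well-definedness of the conjugation action of \(\elin(R)\). I expect the descent to be the decisive obstacle: one must show that the action defined on the pro-group restricts to \(\stlin(R)\) and is independent of the auxiliary denominators \(s\in S\) — that is, that the locally defined actions genuinely glue — which is where a local-global principle for \(\klin_2(R)\) and the exactness and injectivity properties of the pro-group \(\stlin(R)^{(\infty)}\) must be invoked.
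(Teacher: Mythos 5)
Your overall architecture --- reduce to (i) centrality of \(\klin_2(R)\) plus (ii) the existence of one equivariant action covering conjugation, dispose of uniqueness by the central-perfect-extension argument, and obtain the local input from stability of \(\klin_1\), \(\klin_2\) and the semi-local theory via the Steinberg pro-group \(\stlin(R)^{(\infty)}\) --- is the paper's. The gap is precisely the step you yourself flag as ``the decisive obstacle'': the descent from the action of \(\glin(S^{-1}R)\) on \(\stlin(R)^{(\infty)}\) to an action of \(\glin(R)\) on \(\stlin(R)\). There is no usable ``canonical comparison'' along which such an action descends: \(\stlin(R)\) is not the limit of the homotopes \(\stlin(R)^{(s)}\) and does not embed into them compatibly with the action; no single multiplicative set \(S\) suffices, since the hypothesis \(\mathrm{lsr}(A)\le n-2\) (or quasi-finiteness) only gives the local input after localizing at \(S=K\setminus\mathfrak m\) for each maximal ideal \(\mathfrak m\) separately; and the resulting actions live on different pro-groups, so there is nothing to glue in the naive sense. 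Acknowledging the obstacle is not the same as resolving it, and the route you sketch (restrict the pro-group action and check independence of denominators) is not the one that closes.

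The paper's resolution is to never descend the pro-group action at all. For fixed \(g\in\glin(R)\) one first proves, by the localization machinery, that \(\klin_2(R)\) is central and that \(\elin(R)\leqt\glin(R)\), and then defines the candidate action directly inside \(\stlin(R)\) by the preimage cosets \(Y_{ij}(a)=\stmap^{-1}(\up g{t_{ij}(a)})\); these are cosets of the central subgroup \(\klin_2(R)\) and automatically satisfy the Steinberg relations up to central error terms. The pro-group actions are used only to kill those error terms, one identity at a time, through a local-global principle on the ground ring: for each identity one forms \(\mathfrak c=\{k\in K\mid \text{the identity holds with the argument scaled by }k\}\), checks it is an ideal (this is why the paper works with \(kR_{kl}\) rather than single elements), and shows \(\mathfrak c\not\subseteq\mathfrak m\) for every maximal \(\mathfrak m\) because \(\Psi(g)\) acts on \(\stlin(R)^{(\infty)}\) for \(S=K\setminus\mathfrak m\); hence \(\mathfrak c=K\ni 1\). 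The commutators \([Y_{ij}(a),Y_{jk}(b)]\) are then assembled into well-defined generators \(y_{ik}\) via the tensor-product presentation of \(R_{ik}\) over \(R_{jj}\) (lemma \ref{RingPresentation} with \(S=\{1\}\), using \(n\ge 4\)), and \(\Ad_g(x_{ij}(a))=y_{ij}(a)\) is the desired action. Without this device, or an equivalent one, your outline does not produce the global action.
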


We also prove similar result for semi-local \(R\) and for \(n \geq 3\).

In section 2 we recall the definition of Steinberg groups and show how they are related under changes of the family of idempotents such as \(e_1, \ldots, e_n \mapsto e_1, \ldots, e_{n - 2}, e_{n - 1} + e_n\). In section 3 we construct the pro-ring \(R^{(\infty)}\) of homotopes of \(R\) and in section 4 we do the same for the Steinberg group. Section 5 contains a construction of the action of \(\glin(S^{-1} R)\) on the Steinberg pro-group. In the last section we prove the main theorem.

The author wants to thank Sergey Sinchuk for motivation and helpful discussions.

\section{Steinberg groups}

We denote the category of sets by \(\Set\) and the category of groups by \(\Group\). If \(\mathcal C\) is a category, then \(\mathcal C(X, Y)\) is the set of arrows from an object \(X\) to an object \(Y\), \(\mathcal C(X)\) is the set of arrows from \(X\) to itself. Compositions are written from the right to the left, i.e. as \(\mathcal C(Y, Z) \times \mathcal C(X, Y) \rar \mathcal C(X, Z)\). We write \(X \in \mathcal C\) if \(X\) is an object of a category \(\mathcal C\).

For group operations we use the conventions \(\up xy = x y x^{-1}\), \(x^y = y^{-1} x y\), and \([x, y] = xyx^{-1} y^{-1}\). There are useful commutator identities
\[
[xy, z] = \up x{[y, z]}\, [x, z], \quad
[x, yz] = [x, y]\, \up y{[x, z]},
\]
and the Hall -- Witt identity
\[[\up z x, [y, z]]\, [\up y z, [x, y]]\, [\up xy, [z, x]] = 1.\]
In particular, if \(x\) and \(z\) commute, then
\[[yx, z] = [y, z], \quad [x, yz] = [x, y],\]
and if \([x, z]\) lies in the center of the group, then
\[[x, [y, z]] = [[x, y], \up yz].\]

If \(G\) and \(H\) are subgroup of some group, then \([G, H] = \langle [g, h] \mid g \in G, h \in H \rangle\) is their commutant and \(\up GH = [G, H]\, H = \langle \up gh \mid g \in G, h \in H \rangle\). Note that \(H\) normalizes \([G, H]\) by the commutator identities.

A precrossed module is a group homomorphism \(d \colon H \rar G\) with an action of \(G\) on \(H\) by automorphisms (the action is denoted by \(\up gh\)) such that \(d(\up gh) = \up g{d(h)}\). A crossed module is a precrossed module with the property \(\up{d(h)}{h'} = \up h{h'}\). For every crossed module the kernel of \(d\) is a central subgroup of \(H\) and the image is a normal subgroup of \(G\).

Every ring in out paper is associative, but not necessarily unital. An element \(x\) in a non-unital ring \(R\) is called quasi-invertible if it is invertible under the monoidal operation \(x \circ y = xy + x + y\). If \(R\) has an identity, then the group of quasi-invertible elements of \(R\) is isomorphic to the group \(R^*\) of invertible elements by \(x \mapsto x + 1\).

A unital ring \(R\) is called semi-simple if it is a finite product of matrix rings over division rings (of finite sizes). A unital ring \(R\) is called semi-local if \(R / \Jac(R)\) is semi-simple, where \(\Jac(R)\) is the Jacobson radical of \(R\). A unital algebra \(R\) over a commutative ring \(K\) is called finite if it is a finitely generated \(K\)-module and quasi-finite if it is a direct limit of finite algebras. Base changes of finite and quasi-finite algebras are finite and quasi-finite. Any finite algebra over a local commutative ring is semi-local. These properties are preserved under Morita equivalence.

From now on fix a unital ring \(R\). We think about \(R\) as a matrix ring. Its general linear group is the group of invertible elements, \(\glin(R) = R^*\).

We say that an idempotent \(e \in R\) Morita dominates an idempotent \(\widetilde e\) if \(\widetilde e \in ReR\). This is equivalent to the following: \(\widetilde e R e\) is a finite projective left \(eRe\)-module, \(e R \widetilde e\) is a finite projective right \(eRe\)-module dual to \(\widetilde e R e\) via the map \(e R \widetilde e \times \widetilde e R e \rar e R e\), and \(\widetilde e R \widetilde e = \widetilde e R e \otimes_{eRe} eR\widetilde e\) is their endomorphism ring. Clearly, Morita dominance is a pre-order relation on idempotents. Idempotents \(e\) and \(\widetilde e\) are called Morita equivalent if they Morita dominate each other.

If \(e\) and \(\widetilde e\) are orthogonal idempotents in \(R\), then \(e + \widetilde e\) Morita dominates both of them. If \(e\) and \(\widetilde e\) are orthogonal and Morita equivalent, then they are Morita equivalent to \(e + \widetilde e\). From now on fix a complete family \(e_1, \ldots, e_n\) of Morita equivalent orthogonal idempotents in \(R\) for some \(n \geq 1\). We use the notation \(R_{ij} = e_i R e_j\).

For all distinct \(i, j \in \{1, \ldots, n\}\) there are natural group homomorphisms \(t_{ij} \colon R_{ij} \rar \glin(R), x \mapsto 1 + x\). The elements \(t_{ij}(a)\) are called elementary transvections. For example, if \(R = \mat(n, A)\) is a matrix ring and \(e_i = e_{ii}\) are the matrix units, then \(t_{ij}(a)\) are ordinary elementary transvections. In any case they satisfy the Steinberg relations
\begin{enumerate}[label = (St\arabic*)]
\item \(t_{ij}(a)\, t_{ij}(b) = t_{ij}(a + b)\);
\item \([t_{ij}(a), t_{kl}(b)] = 1\) for \(j \neq k\) and \(i \neq l\);
\item \([t_{ij}(a), t_{jk}(b)] = t_{ik}(ab)\) for \(i \neq k\).
\end{enumerate}

A Steinberg group \(\stlin(R)\) is the abstract group generated by symbols \(x_{ij}(a)\) for all distinct \(i, j \in \{1, \ldots, n\}\) and all \(a \in R_{ij}\). The relations on these symbols are the Steinberg relations for \(x_{ij}\) instead of \(t_{ij}\). Clearly, \(x_{ij} \colon R_{ij} \rar \stlin(R)\) is a group homomorphism for any \(i, j\), its image is denoted by \(x_{ij}(*)\) and called a root subgroup. There is a homorphism \(\stmap \colon \stlin(R) \rar \glin(R), x_{ij}(a) \mapsto t_{ij}(a)\). Its image (i.e. the group generated by all elementary transvections) is called an elementary group and denoted by \(\elin(R)\), its kernel is denoted by \(\klin_2(R)\), and its cokernel is denoted by \(\klin_1(R)\) (it is the set of cosets in general). Since homomorphisms \(t_{ij}\) are injective, it follows that the root subgroups \(x_{ij}(*)\) are isomorphic to \(R_{ij}\).

A diagonal group is the group \(\diag(R) = \{g \in \glin(R) \mid e_i g e_j = 0 \text{ for } i \neq j\}\) (it is indeed closed under inversion). This group decomposes as a direct product of \(d_i(*)\) for all \(1 \leq i \leq n\), where \(d_i(a) = 1 + a - e_n\) for \(a \in (R_{ii})^*\) and \(d_i(*)\) is the image of the injective group homomorphism \(d_i \colon (R_{ii})^* \rar \glin(R)\). The diagonal group acts on the Steinberg group by
\begin{itemize}
\item \(\up{d_i(a)}{x_{jk}(b)} = x_{jk}(b)\) for \(k \neq i \neq j\);
\item \(\up{d_i(a)}{x_{ij}(b)} = x_{ij}(ab)\);
\item \(\up{d_i(a)}{x_{ji}(b)} = x_{ji}(ba^{-1})\);
\end{itemize}
where \(a^{-1}\) is the inverse of \(a\) in the ring \(R_{ii}\). The homomorphism \(\stmap\) preserves the action of \(\diag(R)\).

If \(R = \mat(n, A)\) is the matrix algebra and \(e_i\) are the matrix units, then the groups defined so far coincide with the usual \(\glin(n, A)\), \(\stlin(n, A)\), \(\elin(n, A)\), \(\klin_1(n, A)\) (when it is a group), \(\klin_2(n, A)\), and \(\diag(n, A)\).

Consider the root system \(\Phi\) of type \(A_{n - 1}\). It consists of so-called roots \(\mathrm e_j - \mathrm e_i\) in the vector space \(\mathbb R^n\), where \(i, j \in \{1, \ldots, n\}\) are distinct elements. Usually \(\Phi\) is considered as a subset of its linear span \(\mathbb R\Phi\) of dimension \(n - 1\). For every root \(\alpha = \mathrm e_j - \mathrm e_i\) we associate the root subgroup \(x_\alpha(*) = x_{ij}(*)\). Then the Steinberg relations imply that
\[[x_\alpha(*), x_\beta(*)] \leq \prod_{\substack{p\alpha + q\beta \in \Phi\\ p, q > 0}} x_{p\alpha + q\beta}(*)\]
for all non-antiparallel roots \(\alpha\) and \(\beta\). Here the right hand side is a nilpotent subgroup of the Steinberg group (in our case even abelian). The automorphism group of the root system \(\Phi\) is the group \(\mathrm S_n \times \mathbb Z / 2 \mathbb Z\), it acts on \(\Phi\) by permutations of coordinates and by \(\alpha \mapsto -\alpha\). This group also acts by permutations on the family of idempotents \(e_1, \ldots, e_n\), but not on the Steinberg group in general (since \(R\) is not a matrix ring). Note that the automorphism group of \(\Phi\) acts transitively on all roots and on all pairs of roots with fixed angle between them.

Now let \(\alpha \in \Phi\) be a root. The image of the set \(\Phi \setminus \{-\alpha, \alpha\}\) in the factor-space \(\mathbb R \Phi / \mathbb R\alpha\) is denoted by \(\Phi / \alpha\). The fibers of the map \(\Phi \setminus \{-\alpha, \alpha\} \rar \Phi / \alpha\) are precisely the so-called \(\alpha\)-series of roots. We claim that there is unique dot product on the factor-space \(\mathbb R \Phi / \mathbb R\alpha\) making \(\Phi / \alpha\) a root system of type \(A_{n - 2}\). Indeed, without loss of generality \(\alpha = \mathrm e_n - \mathrm e_{n - 1}\). Then \(\mathbb R^n / \mathbb R\alpha\) is isomorphic to \(\mathbb R^{n - 1}\) under the map \(\sum_i p_i \mathrm e_i \mapsto \sum_{i < n - 1} p_i \mathrm e_i + (p_{n - 1} + p_n) \mathrm e_{n - 1}\), so \(\Phi / \alpha\) maps to the standard root system of type \(A_{n - 2}\). The dot product is unique by general theory of root systems.

We show that for any root \(\alpha \in \Phi\) the root system \(\Phi / \alpha\) is actually a root system corresponding to the smaller family of idempotents in \(R\). Without loss of generality, \(\alpha = \mathrm e_n - \mathrm e_{n - 1}\). Denote the ordinary Steinberg group by \(\stlin(R, \Phi)\) and the ordinary diagonal group by \(\diag(R, \Phi)\). The Steinberg group associated to the family of idempotents \(e_1, \ldots, e_{n - 2}, e_\infty = e_{n - 1} + e_n\) is denoted by \(\stlin(R, \Phi / \alpha)\), and similarly to the diagonal group. There is a well-defined map \(F_\alpha \colon \stlin(R, \Phi / \alpha) \rar \stlin(R, \Phi)\) given by
\begin{itemize}
\item \(x_{ij}(a) \mapsto x_{ij}(a)\) for \(i, j < n - 1\);
\item \(x_{i \infty}(a) \mapsto x_{i, n - 1}(a e_{n - 1})\, x_{in}(ae_n)\) for \(i < n - 1\);
\item \(x_{\infty j}(a) \mapsto x_{n - 1, j}(e_{n - 1} a)\, x_{nj}(e_n a)\) for \(j < n - 1\).
\end{itemize}

In other words, every root subgroup \(x_{[\beta]}(*) \leq \stlin(R, \Phi / \alpha)\) for \(\beta \in \Phi \setminus \{-\alpha, \alpha\}\) isomorphically maps onto \(\prod_{\beta + p\alpha \in \Phi} x_{\beta + p\alpha}(*)\), where the product is taken over the \(\alpha\)-series of \(\beta\). Note that \(t_\alpha(*) = \stmap(x_\alpha(*))\) lies in the new diagonal group \(\diag(R, \Phi / \alpha)\), i.e. it nicely acts on \(\stlin(R, \Phi / \alpha)\). When we need an iterated factor \((\Phi / \alpha) / [\beta]\) of the root system, we denote in by \(\Phi / \{\alpha, \beta\}\) (of course, it is canonically isomorphic to \(\Phi / \{\beta, \alpha\}\), they have the same corresponding families of idempotents in \(R\)). Also the root systems \(\Phi / \alpha\) and \(\Phi / (-\alpha)\) are canonically isomorphic, so we may identify \(F_\alpha\) with \(F_{-\alpha}\). Later we prove that \(F_\alpha\) is often an isomorphism.

Actually, our construction of \(\Phi / \alpha\) and a group \(\stlin(R, \Phi / \alpha)\) is a very particular case of a general notion of relative root systems from \cite{PetrovStavrova}, where the same is done for elementary subgroups of isotropic reductive groups.

\section{Pro-objects}

We recall a construction of the pro-completion of a given category \(\mathcal C\) from \cite{CategoryTextbook}, section 6.1. A small category \(\mathcal I\) is called filtered if it is non-empty, for any two objects \(i_1, i_2\) there is a diagram \(i_1 \rar i_3 \lar i_2\) in \(\mathcal I\), and every pair of parallel arrows \(i_1 \rightrightarrows i_2\) equalizes by some arrow \(i_2 \rar i_3\) in \(\mathcal I\). A pro-object in \(\mathcal C\) is a contravariant functor \(X^{(\infty)}\) from a filtered category \(\mathcal I_X\) to \(\mathcal C\). Objects of \(\mathcal I_X\) are called indices of \(X^{(\infty)}\). We write \(X^{(i)}\) for the values of \(X^{(\infty)}\) on indices \(i\) and omit the values of \(X^{(\infty)}\) on arrows \(i \rar j\) in our formulas if the arrow \(i \rar j\) is clear from the context (for example, if \(j\) is a sufficiently large index). So a pro-object \(X^{(\infty)}\) is the formal projective limit of \(X^{(i)}\). We use the notation with upper indices since our pro-objects consist of homotopes of various algebraic objects.

The category of pro-objects is denoted by \(\Pro(\mathcal C)\). By definition,
\[
\Pro(\mathcal C)(X^{(\infty)}, Y^{(\infty)}) = \varprojlim_{j \in \mathcal I_Y} \varinjlim_{i \in \mathcal I_X} \mathcal C(X^{(i)}, Y^{(j)}).
\]
There is a more explicit description of morphisms. We say that a pre-morphism \(f \colon X^{(\infty)} \rar Y^{(\infty)}\) consists of a function \(f^*\) from the set of indices of \(Y\) to the set of indices of \(X\), and of arrows \(f^{(i)} \colon X^{(f^*(i))} \rar Y^{(i)}\) for all \(i \in \mathcal I_Y\) such that for any arrow \(i \rar j\) in \(\mathcal I_Y\) there exists a sufficiently large index \(k \in \mathcal I_X\) making the composition \(X^{(k)} \rar X^{(f^*(i))} \rar Y^{(i)}\) equal to \(X^{(k)} \rar X^{(f^*(j))} \rar Y^{(j)} \rar Y^{(i)}\) (here \(f^*\) is not a functor between index categories). A composition of pre-morphisms \(f \colon X^{(\infty)} \rar Y^{(\infty)}\) and \(g \colon Y^{(\infty)} \rar Z^{(\infty)}\) is the pre-morphism \(g \circ f\), where \((g \circ f)^*(i) = f^*(g^*(i))\) and \((g \circ f)^{(i)} = g^{(i)} \circ f^{(g^*(i))}\). Two parallel pre-morphisms \(f, g \colon X^{(\infty)} \rar Y^{(\infty)}\) are called equivalent if for every \(i \in \mathcal I_Y\) there exists a sufficiently large index \(j \in \mathcal I_X\) making the composition \(X^{(j)} \rar X^{(f^*(i))} \rar Y^{(i)}\) equal to \(X^{(j)} \rar X^{(g^*(i))} \rar Y^{(i)}\). Finally, a morphism \(X^{(\infty)} \rar Y^{(\infty)}\) is an equivalence class of pre-morphisms. Note that equivalence is preserved under compositions.

The category \(\mathcal C\) embeds into \(\Pro(\mathcal C)\) (i.e. there is a fully faithful functor between them), because we may consider every object from \(\mathcal C\) as a pro-object with a single index and a single arrow between indices. So \(\Pro(\mathcal C)(X, Y) \cong \mathcal C(X, Y)\), \(\Pro(\mathcal C)(X^{(\infty)}, Y) \cong \varinjlim_{i \in \mathcal I_X} \mathcal C(X^{(i)}, Y)\), and \(\Pro(\mathcal C)(X, Y^{(\infty)}) \cong \varprojlim_{i \in \mathcal I_Y} \mathcal C(X, Y^{(i)})\). Also \(X^{(\infty)}\) is the projective limit of \(X^{(i)}\) in the category \(\Pro(\mathcal C)\).

The category of pro-sets \(\Pro(\Set)\) has all finite limits by \cite{CategoryTextbook}, proposition 6.1.18. Hence we may consider algebraic objects in \(\Pro(\Set)\) such as rings and groups. Any algebraic formula (say, the commutator or a polynomial with integer coefficients) defines a morphism in \(\Pro(\Set)\) from a product of algebraic objects to an algebraic object. If \(a\) is a variable in such a formula, then \(a \in X^{(\infty)}\) means that \(X^{(\infty)}\) is the domain of \(a\). If \(X^{(\infty)}\) and \(Y^{(\infty)}\) are pro-sets with the same index category \(\mathcal I\), then we may construct their product \(Z^{(\infty)}\) as follows. The index category of \(Z^{(\infty)}\) is \(\mathcal I\), \(Z^{(i)} = X^{(i)} \times Y^{(i)}\), the projection \(Z^{(\infty)} \rar X^{(\infty)}\) is given by the pre-morphism \(\pi_X^*(i) = i\), \(\pi_X^{(i)}(x, y) = x\), and similarly for the projection \(Z^{(\infty)} \rar Y^{(\infty)}\). Also the diagonal morphism \(X^{(\infty)} \rar X^{(\infty)} \times X^{(\infty)}\) is given by the pre-morphism \(\Delta^*(i) = i\), \(\Delta^{(i)}(x) = (x, x)\).

The category of pro-groups \(\Pro(\Group)\) embeds into \(\Pro(\Set)\). Every pro-group is a group object in \(\Pro(\Set)\). It is easy to see that a morphism \(f \in \Pro(\Set)(G^{(\infty)}, H^{(\infty)})\) between pro-groups comes from \(\Pro(\Group)\) if and only if it is a morphism of group objects.

\begin{lemma}\label{ProEpimorphism}
A morphism \(f \colon X^{(\infty)} \rar Y^{(\infty)}\) of pro-sets is an epimorphism if and only if the map \(\varinjlim_{j \in \mathcal I_Y} \Set(Y^{(j)}, T) \rar \varinjlim_{i \in \mathcal I_X} \Set(X^{(i)}, T)\) induced by \(f\) is injective for all sets \(T\). If \(f \colon X^{(\infty)} \rar Y^{(\infty)}\) is an epimorphism in \(\Pro(\Set)\), then \(f \times Z^{(\infty)} \colon X^{(\infty)} \times Z^{(\infty)} \rar Y^{(\infty)} \times Z^{(\infty)}\) is also an epimorphism.
\end{lemma}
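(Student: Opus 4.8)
The plan is to prove the two assertions in turn, deriving the second from the first.

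For the first assertion I would begin by identifying the displayed map with the precomposition map \(f^{*} \colon \Pro(\Set)(Y^{(\infty)}, T) \rar \Pro(\Set)(X^{(\infty)}, T)\), using the canonical isomorphism \(\Pro(\Set)(X^{(\infty)}, T) \cong \varinjlim_{i} \Set(X^{(i)}, T)\) recalled above (and likewise for \(Y\)). The ``only if'' direction is then immediate: an epimorphism is by definition left-cancellable against all morphisms, in particular against the two parallel morphisms with constant (ordinary-set) target \(T\), so \(f^{*}\) is injective. For the converse I would use that \(W^{(\infty)}\) is the projective limit of the ordinary sets \(W^{(k)}\) in \(\Pro(\Set)\). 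Given \(g, h \colon Y^{(\infty)} \rar W^{(\infty)}\) with \(g \circ f = h \circ f\), composing with each projection \(\pi_k \colon W^{(\infty)} \rar W^{(k)}\) gives \(\pi_k g, \pi_k h \colon Y^{(\infty)} \rar W^{(k)}\) with \((\pi_k g) \circ f = (\pi_k h) \circ f\); applying the hypothesis with \(T = W^{(k)}\) yields \(\pi_k g = \pi_k h\) for every \(k\), and the universal property of the projective limit forces \(g = h\). Hence \(f\) is an epimorphism.

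For the second assertion, by the first it suffices to show that \((f \times Z^{(\infty)})^{*} \colon \Pro(\Set)(Y^{(\infty)} \times Z^{(\infty)}, T) \rar \Pro(\Set)(X^{(\infty)} \times Z^{(\infty)}, T)\) is injective for every set \(T\). I would represent two morphisms \(\alpha, \beta\) with this common target at a single index \((j, k)\) of the (filtered) product index category by maps \(\alpha^{(j,k)}, \beta^{(j,k)} \colon Y^{(j)} \times Z^{(k)} \rar T\), and unwind what the equality \((f \times Z^{(\infty)})^{*}\alpha = (f \times Z^{(\infty)})^{*}\beta\) means in the colimit: after passing to sufficiently large indices \(i'\) over \(X\) and \(k'\) over \(Z\), the two composites \(X^{(i')} \times Z^{(k')} \rar Y^{(j)} \times Z^{(k)} \rar T\) (through \(f^{(j)}\) in the first factor) coincide.

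The key step, and the heart of the argument, is to curry in the \(Z\)-variable at the level of these ordinary index-sets, where currying is just the set-theoretic adjunction \(\Set(A \times B, T) \cong \Set(A, T^{B})\). Fixing the large index \(k'\) and setting \(T' = T^{Z^{(k')}}\), I restrict \(\alpha^{(j,k)}\) and \(\beta^{(j,k)}\) along the structure map \(Z^{(k')} \rar Z^{(k)}\) and curry, obtaining maps \(Y^{(j)} \rar T'\) that represent two elements of \(\Pro(\Set)(Y^{(\infty)}, T')\). The unwound equality says precisely that these two elements have the same image under \(f^{*}\); since \(f\) is an epimorphism, the first assertion applied to the ordinary set \(T'\) lets me cancel \(f\) and conclude they are already equal in \(\Pro(\Set)(Y^{(\infty)}, T')\), i.e. the curried maps agree after restriction along some arrow \(Y^{(j'')} \rar Y^{(j)}\). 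Uncurrying restores equality of \(\alpha^{(j,k)}\) and \(\beta^{(j,k)}\) after restriction to \(Y^{(j'')} \times Z^{(k')}\), so \(\alpha = \beta\) in the colimit. The main obstacle is resisting the temptation to manufacture a genuine exponential pro-object \(T^{Z^{(\infty)}}\) and invoke a single adjunction: the naive candidate \(\varinjlim_{k} T^{Z^{(k)}}\) does not work, since \(\Set(X^{(i)}, -)\) fails to commute with the filtered colimit over \(k\) unless each \(X^{(i)}\) is finite. The correct move is to curry only stagewise, keeping the \(Z\)-index \(k'\) frozen while invoking the epimorphism property of \(f\) through the first assertion; the only ambient facts needed for the bookkeeping are that a product of filtered categories is filtered and that in a filtered colimit two elements are equal exactly when they become equal after a single further arrow.
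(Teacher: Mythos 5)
Your proof is correct and takes essentially the same route as the paper: the epimorphism criterion comes from the identification \(\Pro(\Set)(W^{(\infty)},T)\cong\varinjlim_i\Set(W^{(i)},T)\) together with the fact that a limit of injective maps is injective, and the product statement is then reduced to the criterion by currying stagewise with target \(\Set(Z^{(k')},T)\) at a fixed index. The only cosmetic difference is that the paper first reindexes all three pro-sets over a common filtered category (so that \(f\) becomes levelwise) before currying, whereas you keep the product index category and track the indices by hand; both versions of the bookkeeping work.
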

\begin{proof}
The first claim follows from the fact that a limit in \(\Set\) of injective maps is injective. Take an epimorphism \(f \colon X^{(\infty)} \rar Y^{(\infty)}\) and a pro-set \(Z^{(\infty)}\). By \cite{CategoryTextbook}, theorem 6.4.3 we may assume all three pro-sets have the same index category \(\mathcal I\) and \(f\) is a pre-morphism with \(f^*(i) = i\). Let \(T\) be any set, \(i\) be an index, and \(u, v \colon Y^{(i)} \times Z^{(i)} \rar T\) be two maps such that there is an index \(j\) making the two compositions \(X^{(j)} \times Z^{(j)} \rar Y^{(j)} \times Z^{(j)} \rar T\) equal. It follows that the compositions \(X^{(j)} \rar Y^{(j)} \rar \Set(Z^{(j)}, T)\) are also equal. Since \(f\) is an epimorphism, for sufficiently large index \(k\) the two maps \(Y^{(k)} \rar \Set(Z^{(j)}, T)\) coincide, i.e. the maps \(Y^{(k)} \times Z^{(k)} \rar T\) coincide.
\end{proof}

Now let us return to the Steinberg groups. From now on suppose that \(R\) is a \(K\)-algebra for some commutative unital ring \(K\) and fix a multiplicative subset \(S \leq K^\bullet\). Then there is the localization map \(\Psi \colon R \rar S^{-1} R\), where \(S^{-1} R\) is an \(S^{-1} K\)-algebra with a complete orthogonal family of Morita equivalent idempotents. We construct a filtered category \(\mathcal S\). Its objects are the elements of \(S\), its morphisms \(s \rar s'\) are all \(s'' \in S\) such that \(ss'' = s'\), composition and the identity arrows are obvious. By default, our pro-sets have \(\mathcal S\) as the category of indices.

Let \(R^{(\infty)}\) be the pro-group with the index category \(\mathcal S\), \(R^{(s)} = \{a^{(s)} \mid a \in R\}\), and the group operation \(a^{(s)} + b^{(s)} = (a + b)^{(s)}\). The structure maps are \(a^{(ss')} \mapsto (s'a)^{(s)}\). There is a pre-morphism \(R^{(\infty)} \times R^{(\infty)} \rar R^{(\infty)}\) given by \((a^{(s)}, b^{(s)}) \mapsto (sab)^{(s)}\). Hence \(R^{(\infty)}\) becomes a non-unital ring object in \(\Pro(\Set)\), its components \(R^{(s)}\) are the homotopes of \(R\). Similarly, \(R_{ij}^{(\infty)}\) are pro-groups with the index category \(\mathcal S\), where \(R_{ij}^{(s)} = \{a^{(s)} \mid a \in R_{ij}\}\). There are also multiplication pre-morphisms \(R_{ij}^{(\infty)} \times R_{jk}^{(\infty)} \rar R_{ik}^{(\infty)}, (a, b) \mapsto ab\). We usually write the elements of \(R_{ij}^{(s)} \times R_{jk}^{(s)}\) as \(a^{(s)} \otimes b^{(s)}\), variables with the domain \(R_{ij}^{(\infty)} \times R_{jk}^{(\infty)}\) as \(a \otimes b\), and the multiplication morphisms as \(a \otimes b \mapsto ab\). The following lemmas essentially say that \(R_{ik}^{(\infty)} \cong R_{ij}^{(\infty)} \otimes_{R_{jj}^{(\infty)}} R_{jk}^{(\infty)}\) as pro-groups, though we do not define tensor products of abelian pro-groups.

\begin{lemma}\label{RingGeneration}
Let \(i, j, k \in \{1, \ldots, n\}\) be indices. Then there is a positive integer \(N\) such that the morphism 
\[m_N \colon \prod_{p = 1}^N \bigl(R_{ij}^{(\infty)} \times R_{jk}^{(\infty)}\bigr) \rar R_{ik}^{(\infty)}, \enskip (a_p \otimes b_p)_{p = 1}^N \mapsto \sum_{p = 1}^N a_p b_p\]
is an epimorphism of pro-sets.
\end{lemma}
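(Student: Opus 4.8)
The plan is to reduce the statement to a purely ring-theoretic surjectivity and then to observe that the unavoidable ``homotope factor'' \(s\) is exactly absorbed by a single structure map, so that Lemma~\ref{ProEpimorphism} applies. The key ring-theoretic fact is that for a suitable \(N\) the underlying additive map \(\prod_{p=1}^N (R_{ij} \times R_{jk}) \rar R_{ik}\), \((a_p, b_p)_p \mapsto \sum_p a_p b_p\), is already surjective. Indeed, since \(e_i\) and \(e_j\) are Morita equivalent we have \(e_i \in R e_j R\), so there is a finite expression \(e_i = \sum_{q=1}^N r_q e_j s_q\) with \(r_q, s_q \in R\). For any \(c \in R_{ik} = e_i R e_k\) this gives
\[ c = e_i c = \sum_{q=1}^N (e_i r_q e_j)(e_j s_q c), \]
and here \(e_i r_q e_j \in R_{ij}\) while \(e_j s_q c \in e_j R e_k = R_{jk}\); thus \(c\) is a sum of \(N\) such products. (When \(j = i\) or \(j = k\) one may take \(N = 1\) using \(e_i \in R_{ii}\).)

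Next I would pass to the homotopes. At an index \(s\) the morphism \(m_N\) is the map \(m_N^{(s)}\) sending \((a_p^{(s)} \otimes b_p^{(s)})_p\) to \(\bigl(s \sum_p a_p b_p\bigr)^{(s)}\), because the homotope multiplication is \((a^{(s)}, b^{(s)}) \mapsto (sab)^{(s)}\); a one-line computation using that \(s \in K\) is central shows this is strictly compatible with the structure maps, so \(m_N\) is a genuine pre-morphism with \(m_N^* = \mathrm{id}\). By the previous paragraph, as the \(a_p, b_p\) vary the quantity \(\sum_p a_p b_p\) runs over all of \(R_{ik}\), so the image of \(m_N^{(s)}\) is exactly \(\{(sc)^{(s)} \mid c \in R_{ik}\}\).

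The crucial observation is then that this image, although not all of \(R_{ik}^{(s)}\), coincides with the image of the structure map \(R_{ik}^{(s^2)} \rar R_{ik}^{(s)}\), \(a^{(s^2)} \mapsto (sa)^{(s)}\). Hence for every index \(s\) there is a structure arrow into \(s\) whose image lies in the image of \(m_N^{(s)}\). To deduce the epimorphism property via Lemma~\ref{ProEpimorphism}, I would take a set \(T\) and two maps \(u, v \colon R_{ik}^{(s)} \rar T\) whose precompositions with \(m_N\) agree in \(\varinjlim_{s \in \mathcal S} \Set(\prod_p (R_{ij}^{(s)} \times R_{jk}^{(s)}), T)\). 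After enlarging the index, \(u\) and \(v\) then agree on the image of \(m_N^{(s)}\), hence on the image of the structure map from \(R_{ik}^{(s^2)}\); consequently \(u\) and \(v\) become equal once precomposed with that structure map, i.e.\ they already agree in \(\varinjlim_{s \in \mathcal S} \Set(R_{ik}^{(s)}, T)\). This shows the induced map on colimits of hom-sets is injective, so \(m_N\) is an epimorphism of pro-sets.

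The only genuinely nontrivial point to notice is that \(m_N\) is \emph{not} levelwise surjective: the homotope multiplication inserts a factor of \(s\), so \(\mathrm{im}(m_N^{(s)}) = s R_{ik}\) rather than all of \(R_{ik}^{(s)}\), and the naive ``surjective implies epi'' reasoning fails. The lemma works precisely because this missing factor of \(s\) is exactly what one structure map of the pro-set contributes, so the defect disappears in the pro-category; the ring-theoretic surjectivity is routine once Morita equivalence is unwound.
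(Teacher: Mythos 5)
Your proposal is correct and follows essentially the same route as the paper: a Morita decomposition \(e_i = \sum_p x_p y_p\) with \(x_p \in R_{ij}\), \(y_p \in R_{ji}\), the observation that the image of each level map of \(m_N\) (which is \(sR_{ik}\) rather than all of \(R_{ik}^{(s)}\)) contains the image of a structure map, and an appeal to Lemma~\ref{ProEpimorphism}. The paper's version is just a more compressed write-up of the same argument, and your explicit remark that levelwise surjectivity fails but is repaired by the structure maps is exactly the point the paper's displayed identity \((ss'c)^{(ss')} = \sum_p x_p^{(ss')} (y_p c)^{(ss')}\) encodes.
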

\begin{proof}
Let \(e_i = \sum_{p = 1}^N x_p y_p\) for \(x_p \in R_{ij}\) and \(y_p \in R_{ji}\), such a decomposition exists by Morita equivalence of \(e_i\) and \(e_j\). Take a set \(T\) and two maps \(f, g \colon R_{ik}^{(s)} \rar T\) such that
\[f\bigl(\sum_{p = 1}^N a_p^{(ss')} b_p^{(ss')}\bigr) = g\bigl(\sum_{p = 1}^N a_p^{(ss')} b_p^{(ss')}\bigr)\]
for all \(a_p \in R_{ij}\) and \(b_p \in R_{jk}\). Then \(f\) and \(g\) coincide on \(R_{ik}^{(s^2 {s'}^2)}\), because
\[(ss' c)^{(ss')} = \sum_{p = 1}^N x_p^{(ss')} \otimes (y_p c)^{(ss')}\]
for all \(c \in R_{ik}\). By lemma \ref{ProEpimorphism}, \(m_N\) is an epimorphism.
\end{proof}

\begin{lemma}\label{RingPresentation}
Let \(i, j, k \in \{1, \ldots, n\}\) be indices and \(G^{(\infty)}\) be a pro-group. Then a morphism of pro-sets \(g \colon R_{ij}^{(\infty)} \times R_{jk}^{(\infty)} \rar G^{(\infty)}\) factors as \(f \circ m_{ijk}\) for some morphism \(f \colon R_{jk}^{(\infty)} \rar G^{(\infty)}\) of pro-groups if and only if \(g\) satisfies the identities
\begin{itemize}
\item \([g(a \otimes b), g(a' \otimes b')] = 1\);
\item \(g((a + a') \otimes b) = g(a \otimes b)\, g(a' \otimes b)\);
\item \(g(a \otimes (b + b')) = g(a \otimes b)\, g(a \otimes b')\);
\item \(g(ar \otimes b) = g(a \otimes rb)\) for \(a \in R_{ij}^{(\infty)}\), \(r \in R_{jj}^{(\infty)}\), \(b \in R_{jk}^{(\infty)}\).
\end{itemize}
\end{lemma}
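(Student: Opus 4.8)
The plan is to read the four identities as the defining relations of a balanced tensor product, so that the statement becomes the universal property of the (pro-)isomorphism \(R_{ik}^{(\infty)} \cong R_{ij}^{(\infty)} \otimes_{R_{jj}^{(\infty)}} R_{jk}^{(\infty)}\) promised before the lemma, with \(m_{ijk}\) playing the role of the canonical bilinear morphism \(a \otimes b \mapsto ab\). The \emph{necessity} of the four identities is immediate: if \(g = f \circ m_{ijk}\) with \(f\) a homomorphism of pro-groups, then \(g(a \otimes b) = f(ab)\), and the identities follow from bilinearity and associativity of the multiplication pre-morphisms together with the facts that \(f\) carries the group operation \(+\) of \(R_{ik}^{(\infty)}\) to that of \(G^{(\infty)}\) and that its image, being a homomorphic image of the abelian group \(R_{ik}^{(\infty)}\), is abelian (this gives the commuting identity).

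For \emph{sufficiency} I would fix \(N\) as in Lemma \ref{RingGeneration}, so that \(m_N\) is an epimorphism, and observe first that additivity forces \(g(0 \otimes b) = g(a \otimes 0) = 1\), while the commuting identity makes
\[
g^{(N)} \colon \prod_{p=1}^N \bigl(R_{ij}^{(\infty)} \times R_{jk}^{(\infty)}\bigr) \rar G^{(\infty)}, \qquad (a_p \otimes b_p)_{p=1}^N \mapsto \prod_{p=1}^N g(a_p \otimes b_p),
\]
a well-defined morphism of pro-sets, independent of the order of the factors. The goal is to factor \(g^{(N)} = f \circ m_N\); since \(m_N\) is an epimorphism such an \(f\) is unique, and the original factorization \(g = f \circ m_{ijk}\) is then recovered by padding a single tensor with zeros (using \(g(0 \otimes b) = 1\)). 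The map \(f\) will be a homomorphism of pro-groups because addition in \(R_{ik}^{(\infty)}\) corresponds to concatenation of product-decompositions, on which \(g^{(N)}\) multiplies accordingly (here one passes to a \(2N\)-term version of Lemma \ref{RingGeneration}, which is again an epimorphism after zero-padding).

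The heart of the argument, and the step I expect to be the main obstacle, is that \(g^{(N)}\) descends along \(m_N\), i.e. its value depends only on \(\sum_p a_p b_p\). Writing \(e_i = \sum_{p=1}^N x_p y_p\) as in the proof of Lemma \ref{RingGeneration}, the key computation reduces any decomposition to a canonical one:
\[
\prod_q g(a_q \otimes b_q) = \prod_q g(e_i a_q \otimes b_q) = \prod_{p, q} g(x_p \otimes y_p a_q b_q) = \prod_p g\bigl(x_p \otimes y_p \, {\textstyle\sum_q} a_q b_q\bigr),
\]
where the first equality uses \(e_i a_q = a_q\); the second expands \(e_i = \sum_p x_p y_p\) by additivity in the first slot and then moves \(y_p a_q \in R_{jj}^{(\infty)}\) across the tensor sign by the balancing identity; and the third uses additivity in the second slot together with the commuting identity to reorder factors. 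This is exactly constancy of \(g^{(N)}\) on the fibers of \(m_N\), which yields the desired \(f\). The genuinely delicate point is carrying this out in \(\Pro(\Set)\): the equalities above hold literally only in an ordinary ring, whereas in the pro-ring every multiplication introduces a homotope scalar \(s \in S\), so each step must be performed after inserting these scalars and passing to a sufficiently large common index at which the two tuples can be compared (as in Lemma \ref{ProEpimorphism}), and one must finally check that the resulting assignment is a bona fide pre-morphism compatible with the structure maps rather than merely a family of set maps.
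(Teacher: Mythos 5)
Your proposal is correct and follows essentially the same route as the paper: both proofs hinge on writing \(e_i = \sum_p x_p y_p\) via Morita equivalence and defining the candidate homomorphism by the canonical formula \(c \mapsto \prod_p g(x_p \otimes y_p c)\), with the balancing identity sliding \(y_p a\) across the tensor sign and additivity collapsing the product — exactly your key computation read in the other direction. Your extra packaging via \(g^{(N)}\) and fibers of \(m_N\), and the homotope-scalar bookkeeping you flag but defer, correspond to what the paper's proof does directly with the explicit indices \(s\) and \(s^2\).
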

\begin{proof}
The necessity of the identities is clear. By lemma \ref{RingGeneration}, it suffices to consider a group \(G\) instead of a pro-group. Let \(e_i = \sum_p x_p y_p\) for \(x_p \in R_{ij}\) and \(y_p \in R_{ji}\). Take a morphism \(g\) satisfying the identities. For sufficiently large \(s \in S\) the morphism \(g\) is given by a map \(g' \colon R_{ij}^{(s)} \times R_{jk}^{(s)} \rar G\) satisfying the first three identities and the identity \(g'\bigl((sar)^{(s)} \otimes b^{(s)}\bigr) = g'\bigl(a^{(s)} \otimes (srb)^{(s)}\bigr)\) for all \(a \in R_{ij}\), \(r \in R_{jj}\), \(b \in R_{jk}\). Consider the homomorphism \(f' \colon R_{ik}^{(s^2)} \rar G\) given by 
\[f'\bigl(c^{(s^2)}\bigr) = \prod_p g'\bigl(x_p^{(s)} \otimes (y_p c)^{(s)}\bigr).\]
Then for all \(a \in R_{ij}\) and \(b \in R_{jk}\) we have
\begin{align*}
f'\bigl(a^{(s^2)} b^{(s^2)})
&= \prod_p g' \bigl( x^{(s)}_p \otimes (s^2 y_p ab)^{(s)} \bigr)\\
&= \prod_p g' \bigl( (sx_py_p a)^{(s)} \otimes (sb)^{(s)} \bigr)\\
&= g'\bigl(a^{(s^2)} \otimes b^{(s^2)}\bigr).
\end{align*}

It is clear that \(f'\) gives the required morphism of pro-groups.
\end{proof}

Note that \(R^{(\infty)} \cong \prod_{i, j = 1}^n R_{ij}^{(\infty)}\) as a pro-group. If \(e_1, \ldots, e_{n - 2}, e_\infty = e_{n - 1} + e_n\) is another family of idempotents, then \(R_{i\infty}^{(\infty)} \cong R_{i, n - 1}^{(\infty)} \times R_{in}^{(\infty)}\) for \(i < n - 1\), \(R_{\infty j}^{(\infty)} \cong R_{n - 1, j}^{(\infty)} \times R_{nj}^{(\infty)}\) for \(j < n - 1\), and \(R_{\infty\infty}^{(\infty)} \cong R_{n - 1, n - 1}^{(\infty)} \times R_{n - 1, n}^{(\infty)} \times R_{n, n - 1}^{(\infty)} \times R_{nn}^{(\infty)}\).

\section{Homotopes of Steinberg groups}

In this section we construct a certain pro-group \(\stlin(R)^{(\infty)}\) and prove its basic properties. The ``homotopes'' of \(\stlin(R)\) are the groups \(\stlin(R)^{(s)}\) parameterized by \(s \in S\). They are generated by symbols \(x_{ij}^{(s)}(a)\) for \(i \neq j\), \(a \in R_{ij}\) with the modified Steinberg relations
\begin{enumerate}[label = (St\arabic*$^{(s)}$)]
\item \(x_{ij}^{(s)}(a)\, x_{ij}^{(s)}(b) = x_{ij}^{(s)}(a + b)\);
\item \([x_{ij}^{(s)}(a), x_{kl}^{(s)}(b)] = 1\) for \(j \neq k\) and \(i \neq l\);
\item \([x_{ij}^{(s)}(a), x_{jk}^{(s)}(b)] = x_{ik}^{(s)}(sab)\) for \(i \neq k\).
\end{enumerate}
The structure homomorphisms are given by \(x^{(ss')}(a) \mapsto x^{(s)}(s'a)\), and a pro-group \(\stlin(R)^{(\infty)}\) is the formal projective limit of \(\stlin(R)^{(s)}\). There are pre-morphisms \(x_{ij} \colon R_{ij}^{(\infty)} \rar \stlin(R)^{(\infty)}\) of pro-groups with \(a^{(s)} \mapsto x_{ij}^{(s)}(a)\). There is also a pre-morphism \(\mathrm{st} \colon \stlin(R)^{(\infty)} \rar \glin(R)^{(\infty)}\) of pro-groups, where \(\glin(R)^{(s)}\) is the group of quasi-invertible elements of \(R^{(s)}\) (so \(\glin(R)^{(1)}\) is only isomorphic to \(\glin(R)\)) and \(\mathrm{st}^{(s)} \colon \stlin(R)^{(s)} \rar \glin(R)^{(s)}, x_{ij}^{(s)}(a) \mapsto a^{(s)}\). It follows that all maps \(x_{ij}^{(s)} \colon R_{ij} \rar \stlin(R)^{(s)}\) are injective.

\begin{lemma}\label{SteinbergPresentation}
Let \(G^{(\infty)}\) be a pro-group. Then every morphism \(f \colon \stlin(R)^{(\infty)} \rar G^{(\infty)}\) of pro-groups is uniquely determined by its compositions with all \(x_{ij} \colon R_{ij}^{(\infty)} \rar \stlin(R)^{(\infty)}\). Morphisms \(g_{ij} \colon R_{ij}^{(\infty)} \rar G^{(\infty)}\) of pro-groups are obtained in this way if and only if they satisfy (St1)--(St3) in \(\Pro(\Set)\).
\end{lemma}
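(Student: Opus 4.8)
The plan is to prove the two claims separately, treating uniqueness first and then the characterization of which families of morphisms arise.

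\medskip

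\textbf{Uniqueness.} First I would argue that the root subgroup homomorphisms \(x_{ij} \colon R_{ij}^{(\infty)} \rar \stlin(R)^{(\infty)}\) jointly generate \(\stlin(R)^{(\infty)}\) in a sense strong enough to force uniqueness. Concretely, for a fixed index \(s\), the group \(\stlin(R)^{(s)}\) is generated by the elements \(x_{ij}^{(s)}(a)\), so any two morphisms of pro-groups agreeing after composition with every \(x_{ij}\) must agree on a generating set of each \(\stlin(R)^{(s)}\); passing to the limit over \(\mathcal S\) and using the explicit description of morphisms in \(\Pro(\Group)\) in terms of pre-morphisms, the two morphisms are equivalent. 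The only subtlety here is that ``agree after composition with \(x_{ij}\)'' is an equality of morphisms of pro-groups, hence an equivalence of pre-morphisms, so I would need to unwind the equivalence relation and pick sufficiently large indices \(s\) to witness equality on generators; this is routine bookkeeping of the kind already carried out in Lemmas \ref{RingGeneration} and \ref{RingPresentation}.

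\medskip

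\textbf{Necessity of the relations.} The forward direction is immediate: if the \(g_{ij}\) come from a morphism \(f\) via \(g_{ij} = f \circ x_{ij}\), then since the \(x_{ij}\) already satisfy the Steinberg relations (St1)--(St3) as morphisms into \(\stlin(R)^{(\infty)}\) and \(f\) is a homomorphism of pro-groups, the composites \(g_{ij}\) satisfy the same relations in \(\Pro(\Set)\). I would state this in one or two sentences.

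\medskip

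\textbf{Sufficiency.} This is the substantive part. Given morphisms \(g_{ij}\) satisfying (St1)--(St3) in \(\Pro(\Set)\), I want to build \(f\). For a fixed index \(s\) I cannot in general realize \(g_{ij}\) by an honest map out of \(R_{ij}^{(s)}\), so the strategy is the standard colimit trick: each \(g_{ij}\) is represented by an actual map \(g_{ij}' \colon R_{ij}^{(s_{ij})} \rar G^{(t)}\) for some sufficiently large index, and each Steinberg relation, being an equality of morphisms of pro-sets, holds \emph{on the nose} after passing to a large enough common index. Choosing a single index \(s\) dominating all the finitely many \(s_{ij}\) and all the finitely many relation-witnesses, I obtain genuine maps \(g_{ij}' \colon R_{ij}^{(s)} \rar G^{(t)}\) satisfying the literal relations (St1\({}^{(s)}\))--(St3\({}^{(s)}\)), where the crucial multiplicative relation reads \([g_{ij}'(a), g_{jk}'(b)] = g_{ik}'(sab)\), matching the defining presentation of \(\stlin(R)^{(s)}\). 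By the universal property of \(\stlin(R)^{(s)}\) as the group presented by the generators \(x_{ij}^{(s)}(a)\) modulo (St1\({}^{(s)}\))--(St3\({}^{(s)}\)), these maps extend uniquely to a group homomorphism \(f^{(s)} \colon \stlin(R)^{(s)} \rar G^{(t)}\) with \(f^{(s)}(x_{ij}^{(s)}(a)) = g_{ij}'(a)\). Finally I would check that the \(f^{(s)}\) are compatible with the structure maps of \(\stlin(R)^{(\infty)}\) up to equivalence of pre-morphisms, so that they assemble into a well-defined morphism \(f \colon \stlin(R)^{(\infty)} \rar G^{(\infty)}\) of pro-groups with \(f \circ x_{ij} = g_{ij}\).

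\medskip

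\textbf{Main obstacle.} I expect the delicate point to be the passage from relations holding ``in \(\Pro(\Set)\)'' (i.e. up to equivalence, hence only after passing to sufficiently large indices) to relations holding \emph{simultaneously and literally} at one chosen index \(s\). Because (St1)--(St3) involve several different root groups \(R_{ij}\) with pairwise interactions, one must dominate finitely many witnessing indices at once and verify that the filteredness of \(\mathcal S\) lets all relations be satisfied exactly at a common \(s\); the compatibility of the resulting \(f^{(s)}\) across different \(s\) (so that they form a morphism rather than a mere family) is where the equivalence relation on pre-morphisms must be invoked carefully, again using that only finitely many generators and relations are involved in checking any single instance.
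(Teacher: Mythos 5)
Your proposal is correct and fills in exactly the routine unwinding that the paper's own proof omits (the paper simply states that the lemma ``follows directly from the definitions,'' noting only that one needs \(G^{(\infty)}\) to be a genuine pro-group so that morphisms into it reduce levelwise to honest group homomorphisms out of some \(\stlin(R)^{(s)}\) --- precisely the reduction you use when representing each \(g_{ij}\) by an actual map). Your identification of the key points --- generation of each \(\stlin(R)^{(s)}\) by the \(x_{ij}^{(s)}(a)\) for uniqueness, filteredness of \(\mathcal S\) to dominate the finitely many relation-witnesses, and the match between the localized relation \([g'_{ij}(a), g'_{jk}(b)] = g'_{ik}(sab)\) and the defining presentation (St3\(^{(s)}\)) --- is exactly right.
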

\begin{proof}
This follows directly from the definitions. Here we need that \(G^{(\infty)}\) is actually a pro-group, not just a group object in \(\Pro(\Set)\).
\end{proof}

Recall that \(\Phi\) is the root system of type \(A_{n - 1}\), it parametrizes generators \(x_\alpha\) of the pro-group \(\stlin(R, \Phi)^{(\infty)} = \stlin(R)^{(\infty)}\). For every root \(\alpha \in \Phi\) the pro-group \(\stlin(R, \Phi / \alpha)^{(\infty)}\) is defined using the smaller family of idempotents. Now \(F_\alpha \colon \stlin(R, \Phi / \alpha)^{(\infty)} \rar \stlin(R, \Phi)^{(\infty)}\) is a pre-morphism of pro-groups (with \(F_\alpha^*(s) = s\)). We are ready to prove that \(F_\alpha\) is an epimorphism for \(n \geq 3\) and an isomorphism for \(n \geq 4\) in \(\Pro(\Group)\). If \(S = \{1\}\), then this means that \(F_\alpha \colon \stlin(R, \Phi / \alpha) \rar \stlin(R, \Phi)\) is a surjection for \(n \geq 3\) and a bijection for \(n \geq 4\). The proof also shows that the group \(\stlin(R, \Phi)\) is perfect for \(n \geq 3\).

\begin{lemma}\label{FactorRootGeneration}
If \(n \geq 3\) and \(\alpha \in \Phi\) is a root, then \(F_\alpha\) is an epimorphism in \(\Pro(\Group)\).
\end{lemma}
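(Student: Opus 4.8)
The plan is to verify the epimorphism property directly through the right-cancellation criterion, using Lemma~\ref{SteinbergPresentation} to reduce everything to the generating root subgroups. Without loss of generality take \(\alpha = \mathrm e_n - \mathrm e_{n-1}\), so that the smaller family is \(e_1, \ldots, e_{n-2}, e_\infty = e_{n-1} + e_n\). Fix a pro-group \(G^{(\infty)}\) together with two morphisms \(g, h \colon \stlin(R, \Phi)^{(\infty)} \rar G^{(\infty)}\) satisfying \(g \circ F_\alpha = h \circ F_\alpha\); I must show \(g = h\). By Lemma~\ref{SteinbergPresentation} it suffices to prove that \(g \circ x_{ij} = h \circ x_{ij}\) for every pair of distinct indices \(i, j\), i.e. on each root subgroup.

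First I would dispose of all roots other than \(\pm\alpha\), i.e. the root subgroups \(x_{ij}\) with \(\{i,j\} \neq \{n-1, n\}\). These are precisely the roots occurring in the images of the \(\alpha\)-series maps \(F_\alpha \circ x_{[\beta]}\). For \(i, j < n-1\) the map \(F_\alpha\) sends \(x_{ij}\) identically to \(x_{ij}\), so agreement is immediate. For \(i < n-1\) the composite \(F_\alpha \circ x_{i\infty}\) is the morphism \(R_{i\infty}^{(\infty)} \cong R_{i,n-1}^{(\infty)} \times R_{in}^{(\infty)} \rar \stlin(R,\Phi)^{(\infty)}\), \((b,c) \mapsto x_{i,n-1}(b)\, x_{in}(c)\); precomposing with the two coordinate inclusions (setting the other coordinate to \(0\), which are morphisms of pro-groups) extracts \(x_{i,n-1}\) and \(x_{in}\) separately, so \(g\) and \(h\) agree on each. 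The analogous argument with \(F_\alpha \circ x_{\infty j}\) handles \(x_{n-1,j}\) and \(x_{nj}\) for \(j < n-1\). This exhausts every root except \(\pm\alpha\), i.e. \(x_{n-1,n}\) and \(x_{n,n-1}\).

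The main obstacle is this remaining pair, which is not hit by \(F_\alpha\) at all. Here I would use the commutator relation together with Morita equivalence. Since \(n \geq 3\) there is an index \(k \notin \{n-1, n\}\), and the homotope relation (St3\(^{(s)}\)) gives, at the pro-group level, the identity \([x_{n-1,k}(b), x_{k,n}(c)] = x_{n-1,n}(bc)\), where the right-hand side is \(x_{n-1,n}\) composed with the multiplication morphism \(R_{n-1,k}^{(\infty)} \times R_{k,n}^{(\infty)} \rar R_{n-1,n}^{(\infty)}\); the scaling factor \(s\) in (St3\(^{(s)}\)) matches the \((sbc)^{(s)}\) in the multiplication pre-morphism. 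Multiplying \(N\) such commutators and using that \(x_{n-1,n}(*)\) is abelian, I obtain the equality of morphisms \(\prod_{p=1}^N [x_{n-1,k}(b_p), x_{k,n}(c_p)] = x_{n-1,n} \circ m_N\), with \(m_N\) the epimorphism of Lemma~\ref{RingGeneration}. The left-hand side is assembled from the root subgroups \(x_{n-1,k}\) and \(x_{k,n}\), on which \(g\) and \(h\) already agree by the previous step, hence \(g \circ x_{n-1,n} \circ m_N = h \circ x_{n-1,n} \circ m_N\). Because \(m_N\) is an epimorphism of pro-sets, it can be cancelled on the right, yielding \(g \circ x_{n-1,n} = h \circ x_{n-1,n}\); the symmetric computation handles \(x_{n,n-1}\). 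Together with the second paragraph this gives \(g = h\), so \(F_\alpha\) is an epimorphism. The same Morita decomposition exhibiting every generator \(x_{ik}(*)\) as a product of commutators of generators is exactly the claimed perfectness of \(\stlin(R,\Phi)\) for \(n \geq 3\).
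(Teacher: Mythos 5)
Your proposal is correct and follows essentially the same route as the paper's proof: reduce to the root subgroups via Lemma~\ref{SteinbergPresentation}, observe that agreement on the roots \(\beta \neq \pm\alpha\) is immediate from the definition of \(F_\alpha\), and recover \(x_{\pm\alpha}\) from commutators \([x_{n-1,k}(a), x_{kn}(b)]\) together with the epimorphism \(m_N\) of Lemma~\ref{RingGeneration}. The only cosmetic difference is that the paper writes the commutator directly as \(F_\alpha([x_{\infty 1}(a), x_{1\infty}(b)])\) whereas you invoke the already-established agreement on the other root subgroups; these are interchangeable.
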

\begin{proof}
We have to show that two pre-morphisms \(f_1, f_2 \colon \stlin(R, \Phi)^{(\infty)} \rar G\) are equivalent if their compositions with \(F_\alpha\) are equivalent. By lemma \ref{SteinbergPresentation} it suffices to show that \(f_i \circ x_\beta\) is equivalent to \(f_2 \circ x_\beta\) for all \(\beta \in \Phi\). This is clear if \(\beta \neq \pm \alpha\) by definition of \(F_\alpha\). Since we may change the sign of \(\alpha\) and apply an automorphism of \(\Phi\), it remains to check the case \(\beta = \alpha = \mathrm e_n - \mathrm e_{n - 1}\). Note that the index \(1\) is different from \(n\) and \(n - 1\). We have
\[x_{n - 1, n}(ab) = [x_{n - 1, 1}(a), x_{1 n}(b)] = F_\alpha([x_{\infty 1}(a), x_{1 \infty}(b)]),\]
where \(e_\infty = e_{n - 1} + e_n\) is the new idempotent, \(a \in R_{n - 1, 1}^{(\infty)}\), \(b \in R_{1n}^{(\infty)}\). The claim now follows by lemma \ref{RingGeneration}.
\end{proof}

\begin{prop}\label{FactorRootPresentation}
If \(n \geq 4\) and \(\alpha \in \Phi\) is a root, then \(F_\alpha\) is an isomorphism in \(\Pro(\Group)\).
\end{prop}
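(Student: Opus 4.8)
The plan is to construct an explicit inverse to the epimorphism $F_\alpha$ on the level of pro-groups, using Lemma~\ref{SteinbergPresentation} to reduce the problem to verifying the Steinberg relations (St1)--(St3) for a candidate collection of morphisms $g_{[\beta]} \colon R_{[\beta]}^{(\infty)} \rar \stlin(R, \Phi/\alpha)^{(\infty)}$, one for each root $[\beta] \in \Phi/\alpha$. Since $F_\alpha$ is already an epimorphism by Lemma~\ref{FactorRootGeneration}, it suffices to build a pre-morphism $G_\alpha \colon \stlin(R, \Phi)^{(\infty)} \rar \stlin(R, \Phi/\alpha)^{(\infty)}$ in the opposite direction and check that $F_\alpha \circ G_\alpha$ and $G_\alpha \circ F_\alpha$ are both equivalent to the respective identities. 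Without loss of generality I would fix $\alpha = \mathrm e_n - \mathrm e_{n-1}$, so the new idempotent is $e_\infty = e_{n-1} + e_n$, and the target $\stlin(R, \Phi/\alpha)$ is the Steinberg group for the family $e_1, \ldots, e_{n-2}, e_\infty$.

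First I would define $G_\alpha$ on generators. For roots $\beta$ not involving the indices $n-1$ or $n$, I set $x_{ij}(a) \mapsto x_{ij}(a)$, which is forced. The real content is the definition on the root subgroups $x_{i,n-1}(*)$, $x_{in}(*)$ (and their transposes): these must be sent into the single root subgroup $x_{i\infty}(*)$ of the target. The natural guess, dictated by the recipe for $F_\alpha$, is $x_{i,n-1}(a) \mapsto x_{i\infty}(a)$ and $x_{in}(a) \mapsto x_{i\infty}(a)$, exploiting that $R_{i\infty}^{(\infty)} \cong R_{i,n-1}^{(\infty)} \times R_{in}^{(\infty)}$ as pro-groups, together with the analogous assignments on the other side. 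The images of the remaining generators $x_{n-1,n}(*)$ and $x_{n,n-1}(*)$ — the root subgroups for $\pm\alpha$ itself — cannot be read off directly, since $\pm\alpha$ dies in $\Phi/\alpha$; instead I would \emph{define} their images by commutator formulas analogous to the identity used in Lemma~\ref{FactorRootGeneration}, e.g. $x_{n-1,n}(ab) \mapsto [x_{\infty 1}(a), x_{1\infty}(b)]$, relying on Lemma~\ref{RingGeneration} to guarantee that every element of $R_{n-1,n}^{(\infty)}$ is a finite sum of such products $ab$ and on Lemma~\ref{RingPresentation} to ensure the resulting assignment is well-defined as a morphism of pro-groups (independent of the chosen factorization).

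Once the generators are assigned, the bulk of the work is verifying that the $g_{[\beta]}$ satisfy (St1)--(St3) in $\Pro(\Set)$, so that Lemma~\ref{SteinbergPresentation} produces a genuine pro-group morphism $G_\alpha$. Here the hypothesis $n \geq 4$ is essential: to verify a relation involving the delicate generators I need ``room'' to express the relevant products as commutators of root subgroups attached to indices distinct from those appearing in the relation, and with $n-1 \geq 3$ available indices outside any given pair one can always find such an auxiliary index. The additive relations (St1) and the short commutator relations (St3) reduce via Lemmas~\ref{RingGeneration} and~\ref{RingPresentation} to manipulations among products $x_p y_p$ coming from a fixed decomposition $e_i = \sum_p x_p y_p$; the commutation relations (St2) are checked similarly. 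After $G_\alpha$ is constructed, checking $F_\alpha \circ G_\alpha \simeq \mathrm{id}$ and $G_\alpha \circ F_\alpha \simeq \mathrm{id}$ is a direct comparison on the generating root subgroups, again using Lemma~\ref{RingGeneration} to pass from the split pieces back to a single homotope index.

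\emph{The main obstacle} I anticipate is twofold and concentrated in the treatment of the $\pm\alpha$ root subgroups. First, showing that the commutator-based definition $x_{n-1,n}(c) \mapsto G_\alpha(c)$ is well-defined — that two different representations $c = \sum_p a_p b_p$ yield equivalent images — is exactly the situation Lemma~\ref{RingPresentation} was built to handle, but one must check its four identities hold for the specific commutator expression, which is where the Hall--Witt identity and the commutator calculus recalled in Section~2 enter. Second, verifying relation (St3) in the cases where one of the two commutated generators is $x_{n-1,n}$ or $x_{n,n-1}$ requires expanding an iterated commutator and repeatedly invoking (St2)--(St3) of the \emph{target} group, and it is precisely here that $n \geq 4$ (rather than $n \geq 3$) is needed to supply a third, independent index so that the intermediate transvections commute as required; for $n = 3$ this room disappears and $F_\alpha$ is merely an epimorphism, consistent with the sharper hypothesis of this proposition.
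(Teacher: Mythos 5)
Your proposal follows essentially the same route as the paper: define $G_\alpha$ on generators away from $\pm\alpha$ via the splitting $R_{i\infty}^{(\infty)} \cong R_{i,n-1}^{(\infty)} \times R_{in}^{(\infty)}$, define the image of $x_{\pm\alpha}$ by the commutator formula and justify well-definedness via Lemmas~\ref{RingGeneration} and~\ref{RingPresentation}, verify the Steinberg relations using the auxiliary index supplied by $n \geq 4$, and conclude via Lemmas~\ref{SteinbergPresentation} and~\ref{FactorRootGeneration}. This is exactly the paper's argument (the paper economizes slightly by checking only $G_\alpha \circ F_\alpha = \mathrm{id}$ and deducing the other composite from the epimorphism property, which you also note).
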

\begin{proof}
Firsly, we construct morphisms \(\widetilde x_{ij} \colon R_{ij}^{(\infty)} \rar \stlin(R, \Phi / \alpha)^{(\infty)}\) of pro-groups and show that they satisfy the Steinberg relations. Without loss of generality, \(\alpha = \mathrm e_n - \mathrm e_{n - 1}\). We denote the generators of \(\stlin(R, \Phi)\) and \(\stlin(R, \Phi / \alpha)\) by \(x_{ij}\), in the first case \(i, j \in \{1, \ldots, n\}\), and in the second case \(i, j \in \{1, \ldots, n - 2, \infty\}\) (where \(e_\infty = e_{n - 1} + e_n\)).

Let
\begin{itemize}
\item \(\widetilde x_{ij}(a) = x_{ij}(a)\) for \(i, j < n - 1\);
\item \(\widetilde x_{ij}(a) = x_{i\infty}(a)\) for \(i < n - 1\) and \(j \in \{n - 1, n\}\);
\item \(\widetilde x_{ij}(a) = x_{\infty j}(a)\) for \(i \in \{n - 1, n\}\) and \(j < n - 1\);
\item \(\widetilde x_{n - 1, i, n}(a \otimes b) = [\widetilde x_{n - 1, i}(a), \widetilde x_{i n}(b)]\) for \(i < n - 1\), \(a \in R_{n - 1, i}^{(\infty)}\), \(b \in R_{in}^{(\infty)}\).
\end{itemize}
So we have \(\widetilde x_\beta\) for all \(\beta \neq \pm \alpha\). They clearly satisfy the Steinberg relations not involving \(\pm \alpha\). Since no Steinberg relation involves both \(\alpha\) and \(-\alpha\), it suffices to construct \(\widetilde x_\alpha\) and prove the relations with \(\alpha\). The idea is to take \(\widetilde x_\alpha(ab) = \widetilde x_{n - 1, i, n}(a \otimes b)\) for \(a \in R_{n - 1, i}^{(\infty)}\) and \(b \in R_{in}^{(\infty)}\), and show that this is well-defined using lemma \ref{RingPresentation}. Below we use already proved Steinberg relations, the commutator identities, and the Hall -- Witt identity without further mention.

If \(j \neq i, n\) and \(k \neq i, n - 1\), then
\begin{align*}
\up{\widetilde x_{jk}(a)}{\widetilde x_{n - 1, i, n}(b \otimes c)} &= \up{\widetilde x_{jk}(a)}{[\widetilde x_{n - 1, i}(b), \widetilde x_{in}(c)]}\\
&= [\up{\widetilde x_{jk}(a)}{\widetilde x_{n - 1, i}(b)}, \up{\widetilde x_{jk}(a)}{\widetilde x_{in}(c)}]\\
&= [\widetilde x_{n - 1, i}(b), \widetilde x_{in}(c)] = \widetilde x_{n - 1, i, n}(b \otimes c),
\end{align*}
i.e. \(\widetilde x_{jk}\) commutes with \(\widetilde x_{n - 1, i, n}\). The generators \(\widetilde x_{ji}\) for \(j \neq n - 1, n\) also commute with \(\widetilde x_{n - 1, i, n}\) because
\begin{align*}
\up{\widetilde x_{ji}(a)}{\widetilde x_{n - 1, i, n}(b \otimes c)} &= [\widetilde x_{n - 1, i}(b), \widetilde x_{in}(c)\, \widetilde x_{jn}(\mu_{jin}(a \otimes c))]\\
&= \widetilde x_{n - 1, i, n}(b \otimes c).
\end{align*}
It follows that \(\widetilde x_{n - 1, i}(ab)\) commutes with \(\widetilde x_{n - 1, i, n}\) for \(a \in R_{n - 1, j}^{(\infty)}\) and \(b \in R_{jn}^{(\infty)}\) if \(j\) is distinct from \(i\), \(n - 1\), \(n\) (such a \(j\) exists since \(n \geq 4\)). By lemmas \ref{ProEpimorphism} and \ref{RingGeneration}, \(\widetilde x_{n - 1, i, n}\) commutes with \(\widetilde x_{n - 1, i}\). It may be proved similarly (or using an outer automorphism of \(\Phi\) preserving \(\alpha\)) that \(\widetilde x_{n - 1, i, n}\) commutes with \(\widetilde x_{ik}\) for all \(k \neq n - 1\). Hence we have all cases of (St2).

Now let us show that \(\widetilde x_{n - 1, i, n}(a \otimes b)\) is biadditive. Indeed,
\begin{align*}
\widetilde x_{n - 1, i, n}((a + a') \otimes b) &= [\widetilde x_{n - 1, i}(a + a'), \widetilde x_{in}(b)]\\
&= [\widetilde x_{n - 1, i}(a), \widetilde x_{in}(b)]\,
[\widetilde x_{n - 1, i}(a'), \widetilde x_{in}(b)]\\
&= \widetilde x_{n - 1, i, n}(ab)\, \widetilde x_{n - 1, i, n}(a'b),
\end{align*}
and similarly for the second variable.

If \(i \neq j\) are indices from \(\{1, \ldots, n - 2\}\), then for \(a \in R_{n - 1, i}^{(\infty)}\), \(b \in R_{ij}^{(\infty)}\), \(c \in R_{jn}^{(\infty)}\) we have
\begin{align*}
\widetilde x_{n - 1, i, n}(a \otimes bc) &= [\widetilde x_{n - 1, i}(a), [\widetilde x_{ij}(b), \widetilde x_{jn}(c)]]\\
&= [[\widetilde x_{n - 1, i}(a), \widetilde x_{ij}(b)], \up{\widetilde x_{ij}(b)}{\widetilde x_{jn}(c)}]\\
&= [\widetilde x_{n - 1, j}(ab), \widetilde x_{jn}(c)\, \widetilde x_{in}(bc)]\\
&= \widetilde x_{n - 1, j, n}(ab \otimes c).
\end{align*}

It follows that \(\widetilde x_{n - 1, i, n}(a \otimes bcd) = \widetilde x_{n - 1, i, n}(abc \otimes d)\) for \(a \in R_{n - 1, i}^{(\infty)}\), \(b \in R_{ij}^{(\infty)}\), \(c \in R_{ji}^{(\infty)}\), \(d \in R_{in}^{(\infty)}\). By lemmas \ref{ProEpimorphism} and \ref{RingGeneration}, the last identity from the statement of lemma \ref{RingPresentation} holds, so the morphism \(\widetilde x_{n - 1, n} \colon R_{n - 1, n}^{(\infty)} \rar \stlin(R, \Phi / \alpha)\) is well-defined by \(\widetilde x_{n - 1, n}(ab) = \widetilde x_{n - 1, 1, n}(a \otimes b)\) for \(a \in R_{n - 1, 1}^{(\infty)}\) and \(b \in R_{1n}^{(\infty)}\), it satisfies (St1). By the relation between \(\widetilde x_{n - 1, i, n}\) and \(\widetilde x_{n - 1, j, n}\) we have (St3) with \(\alpha\) in the right hand side.

It remains to prove (St3) with \(\alpha\) in the left hand side. If \(i\) and \(j\) are different indices less that \(n - 1\), then
\begin{align*}
[\widetilde x_{i, n - 1}(a), \widetilde x_{n - 1, j, n}(b \otimes c)] &= [\widetilde x_{i, n - 1}(a), [\widetilde x_{n - 1, j}(b), \widetilde x_{jn}(c)]]\\
&= [[\widetilde x_{i, n - 1}(a), \widetilde x_{n - 1, j}(b)], \widetilde x_{jn}(c)\, \widetilde x_{n - 1, j, n}(b \otimes c)]\\
&= [\widetilde x_{ij}(ab), \widetilde x_{jn}(c)\, \widetilde x_{n - 1, j, n}(b \otimes c)] = \widetilde x_{in}(abc),
\end{align*}
hence \([\widetilde x_{i, n - 1}(a), \widetilde x_{n - 1, n}(b)] = \widetilde x_{in}(ab)\) by lemmas \ref{ProEpimorphism}, \ref{RingGeneration}, and by commutativity of \(\widetilde x_{n - 1, n}\) with \(\widetilde x_{in}\) (here we also use that \(n \geq 4\)). The last relation \([\widetilde x_{n - 1, n}(a), \widetilde x_{ni}(b)] = \widetilde x_{n - 1, i}(ab)\) may be shown similarly or again using an outer automorphism.

Now we have morphism \(G_\alpha \colon \stlin(R, \Phi)^{(\infty)} \rar \stlin(R, \Phi / \alpha)^{(\infty)}\) of pro-groups by lemma \ref{SteinbergPresentation}. It is clear from the construction that \(G_\alpha \circ F_\alpha\) is the identity. Hence \(F_\alpha \circ G_\alpha\) is also the identity by lemma \ref{FactorRootGeneration}.
\end{proof}

\section{Globalization}

First we show that the the semi-direct product \(\stlin(S^{-1} R) \rtimes \diag(S^{-1} R)\) acts on the pro-group \(\stlin(R)^{(\infty)}\). The next lemma also shows that \(\glin(S^{-1} R)^*\) acts on the pro-group \(\glin(R)^{(\infty)}\) (and on the ring pro-object \(R^{(\infty)}\)) by inner automorphisms.

\begin{lemma}\label{RingLocalAction}
Let \(i, j \in \{1, \ldots, n\}\) be indices. Then for every \(\frac as \in S^{-1} R_{ii}\) the family of maps 
\[\mathrm L_{(a, s)}^{(s')} \colon R_{ij}^{(ss')} \rar R_{ij}^{(s')},\enskip b^{(ss')} \mapsto (ab)^{(s')}\]
gives a well-defined endomorphism \(\mathrm L_{\frac as}\) of the pro-group \(R_{ij}^{(\infty)}\). Moreover, \(\mathrm L\) is a ring homomorphism from \(S^{-1} R_{ii}\) to the ring \(\Pro(\Group)(R_{ij}^{(\infty)})\). There is a similarly defined ring anti-homomorphism \(\mathrm R\) from \(S^{-1} R_{jj}\) to \(\Pro(\Group)(R_{ij}^{(\infty)})\). They satisfy the identities
\[
\mathrm L_u(ab) = \mathrm L_u(a) b,\quad
\mathrm R_v(a) b = a \mathrm L_v(b),\quad
\mathrm R_w(ab) = a \mathrm R_w(b)
\]
in \(\Pro(\Set)\) for all \(u \in S^{-1} R_{ii}\), \(v \in S^{-1} R_{jj}\), \(w \in S^{-1} R_{kk}\) and for \(a \in R_{ij}^{(\infty)}\), \(b \in R_{jk}^{(\infty)}\).
\end{lemma}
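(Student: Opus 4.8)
The plan is to exhibit each \(\mathrm L_{(a,s)}\) as an explicit pre-morphism, to verify that it is well defined both as an endomorphism of the pro-group and as a function of the fraction \(\frac as\), and then to read off the ring-theoretic assertions and the three identities from short computations with representatives, all of which reduce to associativity in \(R\) together with the centrality of the scalars \(s, s', \ldots \in S \leq K^\bullet\).

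First I would check that the family \(\mathrm L_{(a,s)}^{(s')}\) is a pre-morphism of pro-groups with index function \(\mathrm L^*(s') = s s'\). Each component \(b^{(ss')} \mapsto (ab)^{(s')}\) is additive because left multiplication by \(a \in R_{ii}\) is additive on \(R_{ij}\), so it is a group homomorphism \(R_{ij}^{(ss')} \rar R_{ij}^{(s')}\). For compatibility with the structure maps, given an arrow \(s' \rar s' s''\) in \(\mathcal S\) I take the large index \(s s' s''\) lying over both \(s s'\) and \(s s' s''\); the two resulting maps \(R_{ij}^{(ss's'')} \rar R_{ij}^{(s')}\) send \(c^{(ss's'')}\) to \((s'' a c)^{(s')}\) and to \((a s'' c)^{(s')}\), which coincide because \(s'' \in K\) is central. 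Hence \(\mathrm L_{(a,s)} \in \Pro(\Group)(R_{ij}^{(\infty)})\).

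Next I would show independence of the representative: if \(\frac as = \frac{a'}{s'}\) in \(S^{-1} R_{ii}\), there is \(u \in S\) with \(u s' a = u s a'\) in \(R_{ii}\). To prove \(\mathrm L_{(a,s)}\) and \(\mathrm L_{(a',s')}\) equivalent, for a target index \(t\) I take the common source \(u s s' t\) (which lies over both \(st\) and \(s't\)); the two composites send \(c^{(uss't)}\) to \((u s' a c)^{(t)}\) and to \((u s a' c)^{(t)}\), equal by the choice of \(u\). This is the step I expect to require the most care, since it is where the localization relation and the non-functorial index functions interact; everything else is bookkeeping. With \(\mathrm L\) well defined, the ring structure follows by computing on representatives over the common index \(s s' t\): placing both \(\mathrm L_{\frac as}\) and \(\mathrm L_{\frac{a'}{s'}}\) over the source \(R_{ij}^{(ss't)}\), their pointwise sum sends \(c\) to \(((s' a + s a') c)^{(t)}\), which is \(\mathrm L_{\frac as + \frac{a'}{s'}}\); their composite sends \(c\) to \((a a' c)^{(t)}\), which is \(\mathrm L_{\frac as \cdot \frac{a'}{s'}}\); and \(\mathrm L_{\frac{e_i}1}\) is the identity since \(e_i b = b\). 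The right multiplications \(\mathrm R_{(c,s)}^{(s')} \colon b^{(ss')} \mapsto (bc)^{(s')}\) are treated identically, the only change being that composition reverses the order of the factors, which yields the anti-homomorphism \(\mathrm R\).

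Finally, the three identities are verified by evaluating both sides over the common source index \(s_0 s\) (with \(s_0\) the denominator of \(u\), \(v\), or \(w\)) and using that the multiplication pre-morphism acts at index \(s\) by \((p^{(s)}, q^{(s)}) \mapsto (s p q)^{(s)}\). For \(\mathrm L_u(ab) = \mathrm L_u(a)\, b\) with \(u = \frac r{s_0}\), both sides send \((a^{(s_0 s)}, b^{(s_0 s)})\) to \((s_0 s\, r a b)^{(s)}\): on the left the factor \(s_0\) enters through \(\mathrm L_u\), while on the right it enters when \(b\) is transported from index \(s_0 s\) to index \(s\) by the structure map. The identities \(\mathrm R_v(a)\, b = a\, \mathrm L_v(b)\) and \(\mathrm R_w(ab) = a\, \mathrm R_w(b)\) come out the same way, each side reducing to \((s_0 s\, a c b)^{(s)}\) with \(v = \frac c{s_0}\), respectively \((s_0 s\, a b c)^{(s)}\) with \(w = \frac c{s_0}\), by associativity in \(R\) and centrality of the scalars. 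The only genuine subtlety throughout is aligning the shifted index functions via structure maps so that the extra denominators cancel; the algebra itself is immediate.
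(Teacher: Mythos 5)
Your argument is correct and follows the same route as the paper: exhibit \(\mathrm L_{(a,s)}\) as a pre-morphism with index shift \(s' \mapsto ss'\), check that equal fractions give equivalent pre-morphisms by passing to a common large index, and verify the ring structure and the three compatibility identities by direct computation on representatives, with the scalar factors absorbed by the structure maps. The paper only records the key equivalence \(\mathrm L_{(a,s)} \sim \mathrm L_{(as',ss')}\) and one of the identities, leaving the rest as routine; your write-up just fills in those details along the identical lines.
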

\begin{proof}
For any \(a\) and \(s\) this family is a pre-endomorphism \(\mathrm L_{(a, s)}\) of \(R_{ij}^{(\infty)}\). We show that \(\mathrm L_{(a, s)}\) is equivalent to \(\mathrm L_{(as', ss')}\) for all \(s' \in S\). Indeed, if \(s'' \in S\) is an index, then
\[\mathrm L_{(a, s)}^{(s'')}\bigl(b^{(ss's'')}\bigr) = (s' ab)^{(s'')} = \mathrm L_{(as', ss')}^{(s'')}\bigl(b^{(ss's'')}\bigr)\]
Clearly, \(\mathrm L\) is then a homomorphism of rings, and similarly \(\mathrm R\) is an anti-homomorphism of rings. The first identity follows from
\[\mathrm L_{(a, s)}^{(s')}\bigl(b^{(ss')} c^{(ss')}\bigr) = (ss'abc)^{(s')} = \mathrm L^{(s')}_{(a, s)}\bigl(b^{(ss')}\bigr)\, c^{(ss')},\]
and the other two may be proved similarly.
\end{proof}

\begin{prop}\label{SteinbergLocalAction}
If \(n \geq 4\), then there is an action \(\Ad\) of \(\stlin(S^{-1} R) \rtimes \diag(S^{-1} R)\) on \(\stlin(R)^{(\infty)}\) by automorphisms such that \(\mathrm{st} \colon \stlin(R)^{(\infty)} \rar \glin(R)^{(\infty)}\) is equivariant. It is given by
\begin{itemize}
\item \(\Ad_{d_i(u)}(x_{jk}(a)) = x_{jk}(a)\) for \(k \neq i \neq j\);
\item \(\Ad_{d_i(u)}(x_{ij}(a)) = x_{ij}(\mathrm L_u(a))\);
\item \(\Ad_{d_i(u)}(x_{ji}(a)) = x_{ji}(\mathrm R_{u^{-1}}(a))\);
\item \(\Ad_{x_{ij}(v)}(x_{kl}(a)) = x_{kl}(a)\) for \(j \neq k\) and \(l \neq i\);
\item \(\Ad_{x_{ij}(v)}(x_{jk}(a)) = x_{ik}(\mathrm L_v(a))\, x_{jk}(a)\) for \(i \neq k\);
\item \(\Ad_{x_{ij}(v)}(x_{ki}(a)) = x_{kj}(\mathrm R_v(a))\, x_{ki}(a)\) for \(j \neq k\).
\end{itemize}
\end{prop}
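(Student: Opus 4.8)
The plan is to use Lemma~\ref{SteinbergPresentation} as the engine for producing endomorphisms of \(\stlin(R)^{(\infty)}\) and then to assemble the individual generator–actions into a genuine action through the universal property of the semidirect product. First I would record the (mild) generalization of Lemma~\ref{RingLocalAction} to off-diagonal scalars: for \(v = a/s \in S^{-1} R_{ij}\) the maps \(b^{(ss')} \mapsto (ab)^{(s')}\) give a pro-group morphism \(\mathrm L_v \colon R_{jk}^{(\infty)} \rar R_{ik}^{(\infty)}\), and dually \(\mathrm R_v \colon R_{ki}^{(\infty)} \rar R_{kj}^{(\infty)}\); the construction and the compatibility identities \(\mathrm L_u(ab) = \mathrm L_u(a)\,b\), \(\mathrm R_v(a)\,b = a\,\mathrm L_v(b)\), \(\mathrm R_w(ab) = a\,\mathrm R_w(b)\) are verified exactly as in that lemma. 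With these in hand each right-hand side in the statement is a morphism of pro-groups \(R_{kl}^{(\infty)} \rar \stlin(R)^{(\infty)}\); the only point needing comment is that \(x_{ik}(\mathrm L_v(a))\,x_{jk}(a)\) is additive in \(a\), which holds because the root subgroups \(x_{ik}(*)\) and \(x_{jk}(*)\) commute by (St2).

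Next I would check that, for each fixed generator \(g \in \{d_i(u), x_{ij}(v)\}\), the assigned family of morphisms satisfies the Steinberg relations (St1)--(St3), so that by Lemma~\ref{SteinbergPresentation} it defines a single endomorphism \(\Ad_g\) of \(\stlin(R)^{(\infty)}\). For \(g = d_i(u)\) this is immediate from \(\mathrm L_u\), \(\mathrm R_{u^{-1}}\) being a ring homomorphism and anti-homomorphism. For \(g = x_{ij}(v)\) the relations not meeting the index pair \(\{i,j\}\) are trivial, and the remaining cases are handled with the commutator identities, the Hall--Witt identity, and the three compatibility identities above; the factor \(s\) appearing in (St3\(^{(s)}\)) is precisely what \(\mathrm L, \mathrm R\) absorb. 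Whenever a relation must be verified on a target group \(R_{ik}^{(\infty)}\) whose elements are not individually of the form \(ab\), I would reduce to factorizable elements using the epimorphism Lemmas~\ref{ProEpimorphism} and~\ref{RingGeneration}, exactly as in the proof of Proposition~\ref{FactorRootPresentation}.

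Then I would build the action itself via the universal property of the semidirect product: it suffices to produce a homomorphism \(\delta \colon \diag(S^{-1} R) \rar \mathrm{Aut}(\stlin(R)^{(\infty)})\), a homomorphism \(\rho \colon \stlin(S^{-1} R) \rar \mathrm{Aut}(\stlin(R)^{(\infty)})\), and to check the compatibility \(\delta(d)\,\rho(x)\,\delta(d)^{-1} = \rho(\up{d}{x})\) with respect to the diagonal action inside \(\stlin(S^{-1} R)\). The map \(\delta\) is routine from the (anti-)multiplicativity of \(\mathrm L, \mathrm R\). For \(\rho\) I would verify that \(v \mapsto \Ad_{x_{ij}(v)}\) itself satisfies (St1)--(St3) as an identity of automorphisms; here (St1) uses additivity of \(\mathrm L_v, \mathrm R_v\) in \(v\), while (St3), namely \([\Ad_{x_{ij}(v)}, \Ad_{x_{jk}(w)}] = \Ad_{x_{ik}(vw)}\), uses \(\mathrm L_v \circ \mathrm L_w = \mathrm L_{vw}\) together with the compatibility identities, with no stray \(s\) surviving because \(v\), \(w\), and their product live in \(S^{-1} R\). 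Each such equality is checked by evaluating both sides on the generators \(x_{kl}(a)\) and comparing. Since the source is a group, the resulting monoid homomorphism into endomorphisms automatically lands in \(\mathrm{Aut}(\stlin(R)^{(\infty)})\), so we obtain the action \(\Ad\); uniqueness is clear since the elements \(x_{ij}(v)\), \(d_i(u)\) generate the acting group and the formulas are forced by the requirement that \(\mathrm{st}\) be equivariant. That equivariance is itself checked on generators against the inner action of \(\glin(S^{-1} R)\) on \(\glin(R)^{(\infty)}\) recorded just before Lemma~\ref{RingLocalAction}.

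The main obstacle is the second step for \(g = x_{ij}(v)\) together with the (St3) check for \(v \mapsto \Ad_{x_{ij}(v)}\) in the third step: these are the places where one must simultaneously manage the Hall--Witt identity, the commutation of overlapping root subgroups, and—most delicately—the bookkeeping of the homotope parameter \(s\) as it passes through \(\mathrm L\) and \(\mathrm R\). The repeated need to descend from generated to factorizable elements via Lemmas~\ref{ProEpimorphism} and~\ref{RingGeneration} is what makes the pro-group formalism, rather than a naive element-wise argument, essential, since the groups \(R_{ik}^{(\infty)}\) are only generated, not freely spanned, by the products \(ab\).
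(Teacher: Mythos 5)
There is a genuine gap, and it sits at the heart of the proposition. The six formulas do \emph{not} assign a value to \(\Ad_{x_{ij}(v)}\) on the opposite root subgroup \(x_{ji}(*)\): for the pair \((k,l)=(j,i)\) the first bullet fails because \(k=j\), the second because \(k=i\), and the third because \(k=j\). This is not an oversight in the statement --- conjugation of \(t_{ji}(a)\) by \(t_{ij}(v)\) in \(\glin(S^{-1}R)\) is not a product of elementary transvections in any simple normal form --- so Lemma~\ref{SteinbergPresentation}, which requires a compatible family of morphisms \(g_{kl}\colon R_{kl}^{(\infty)}\rar G^{(\infty)}\) for \emph{all} \(k\neq l\) satisfying \emph{all} the Steinberg relations, cannot be invoked as you propose. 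Your plan only uses Lemmas~\ref{ProEpimorphism} and~\ref{RingGeneration} to \emph{verify} relations on non-factorizable elements; what is actually needed is to \emph{define} the missing component on \(x_{ji}(*)\) via factorizations \(a=bc\) and to prove this is well defined (Lemma~\ref{RingPresentation}), which is essentially re-proving Proposition~\ref{FactorRootPresentation}. Relatedly, your outline never uses the hypothesis \(n\geq 4\), which is a sign that the hard step has been skipped: that hypothesis enters exactly here.

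The paper's route avoids the direct computation entirely: since \(v\in S^{-1}R_{ij}\) gives \(1+v\in (S^{-1}R_{\infty\infty})^*\) for the coarser family \(e_1,\ldots,\widehat{e_i},\ldots,\widehat{e_j},\ldots,e_\infty=e_i+e_j\), the element \(x_{ij}(v)\) acts on \(\stlin(R,\Phi/\alpha)^{(\infty)}\) as a \emph{diagonal} element, and Proposition~\ref{FactorRootPresentation} (where \(n\geq 4\) is used) transports this to an automorphism of \(\stlin(R,\Phi)^{(\infty)}\); only the stabilization of \(x_\alpha\) itself then needs a factorization argument. The same device resolves a second weak point of your third step: checking \([\Ad_{x_{ij}(v)},\Ad_{x_{jk}(w)}]=\Ad_{x_{ik}(\mathrm L_v(w))}\) ``by evaluating both sides on the generators \(x_{kl}(a)\)'' again runs into generators on which no formula is available; the paper instead checks the relation on \(\stlin(R,\Phi/\{\alpha,\beta\})^{(\infty)}\), where both root elements are diagonal, and descends by the epimorphism of Lemma~\ref{FactorRootGeneration}. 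Your proposal could be repaired by incorporating these two reductions, but as written the construction of \(\Ad_{x_{ij}(v)}\) is incomplete.
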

\begin{proof}
For every \(u \in S^{-1} R\) the first three formulas give a well-defined endomorphism \(\Ad_u\) of the pro-group \(\stlin(R)^{(\infty)}\) by lemmas \ref{SteinbergPresentation} and \ref{RingLocalAction}. The properties \(\Ad_{d_i(ab)} = \Ad_{d_i(a)} \circ \Ad_{d_i(b)}\) and \(\Ad_{d_i(a)} \circ \Ad_{d_j(c)} = \Ad_{d_j(c)} \circ \Ad_{d_i(a)}\) for all distinct \(i, j\) follow from the same lemmas. Now note that \(\diag(S^{-1} R)\) is generated by all \((S^{-1} R_{ii})^*\) with the relations \(d_i(ab) = d_i(a) d_i(b)\) and \(d_i(a) d_j(c) = d_j(c) d_i(a)\) as an abstract group. Here we do not need the condition \(n \geq 4\).

The automorphism \(\Ad_{x_\alpha(v)}\) is well-defined on \(\stlin(R, \Phi / \alpha)^{(\infty)}\), because then the element \(x_\alpha(v)\) may be considered as its image in \(\diag(S^{-1} R, \Phi / \alpha)\). Hence by proposition \ref{FactorRootPresentation} this is also an automorphism of \(\stlin(R, \Phi)^{(\infty)}\). We still have to show that \(\Ad_{x_\alpha(v)}\) stabilizes the generator \(x_\alpha\) of \(\stlin(R, \Phi)^{(\infty)}\). Without loss of generality, let \(\alpha = \mathrm e_{n - 1} - \mathrm e_n\). Other relations imply that \(\Ad_{x_{n - 1, n}(v)}\) stabilizes \(x_{n - 1, n}(ab)\) for \(a \in R_{n - 1, 1}^{(\infty)}\) and \(b \in R_{1n}^{(\infty)}\). Hence it stabilizes \(x_{n - 1, n}(c)\) by lemma \ref{RingGeneration}.

By lemma \ref{RingLocalAction}, the relations between \(d_i(u)\) and \(x_{jk}(v)\) hold for these automorphisms, as well as (St1) for \(\Ad_{x_{jk}(v)}\). It remains to prove that \(\Ad_{x_{ij}(v)}\) satisfy the Steinberg relations (St2) and (St3). If \(\alpha\) and \(\beta\) are non-parallel roots, then the relation between \(\Ad_{x_\alpha(v)}\) and \(\Ad_{x_\beta(v')}\) holds on the pro-group \(\stlin(R, \Phi / \{\alpha, \beta\})^{(\infty)}\) (since it holds for the images of \(x_\alpha(v)\) and \(x_\beta(v')\) in \(\diag(S^{-1} R, \Phi / \{\alpha, \beta\})\)), hence also on the whole pro-group \(\stlin(R, \Phi)^{(\infty)}\) by lemma \ref{FactorRootGeneration}. It is clear from the definition that \(\mathrm{st} \colon \stlin(R)^{(\infty)} \rar \glin(R)^{(\infty)}\) is equivariant.
\end{proof}

For the next lemma we need maximal unipotent subgroups of the Steinberg group. Let at the moment \(R\) be arbitrary unital ring with a complete family of Morita equivalent orthogonal idempotents. The standard maximal unipotent subgroups of \(\stlin(R)\) are \(U^+ = U^+(\Phi) = \langle x_{ij}(a) \mid i < j \rangle\) and \(U^- = U^-(\Phi) = \langle x_{ij}(a) \mid i > j \rangle\). The Steinberg relations imply that \(\stmap\) maps \(U^+\) and \(U^-\) isomorphically onto the groups of upper and lower unitriangular elements of \(\glin(R)\). These groups are nilpotent. If \(\alpha = \mathrm e_{i + 1} - \mathrm e_i\) is a simple root, then the subgroups \(U^+(\Phi / \alpha)\) and \(U^-(\Phi / \alpha)\) are well-defined and we have decompositions
\[
U^+(\Phi) = U^+(\Phi / \alpha) \rtimes x_\alpha(*), \quad
U^-(\Phi) = U^-(\Phi / \alpha) \rtimes x_{-\alpha}(*).
\]
Moreover, both \(U^+(\Phi / \alpha)\) and \(U^-(\Phi / \alpha)\) are normalized by \(x_\alpha(*)\) and \(x_{-\alpha}(*)\).

\begin{lemma}\label{GaussDecomposition}
Let \(R\) be a semi-local ring with a complete family of Morita equivalent orthogonal idempotents \(e_1, \ldots, e_n\). Then there is Gauss decomposition \(\glin(R) = \stmap(U^+) \stmap(U^-) \stmap(U^+) \diag(R)\), and \(\glin(R)\) is the factor of \(\stlin(R) \rtimes \diag(R)\) by the relations
\[\prod_{k = 1}^N (x_{i, i + 1}(a_k)\, x_{i + 1, i}(b_k)) = d_i(u)\, d_{i + 1}(v)\]
whenever the images of both sides coincide in \(\glin(R)\) (it suffices to take \(N = 3\) for all \(i \in \{1, \ldots, n - 1\}\)).
\end{lemma}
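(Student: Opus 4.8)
The plan is to prove the two assertions in turn, deducing the presentation from the Gauss decomposition. Throughout I use three standard facts about a semi-local ring: it has Bass stable rank \(1\); every corner \(e_i R e_i\) and every ``initial'' corner \((e_1 + \cdots + e_k)R(e_1 + \cdots + e_k)\) is again semi-local (with Morita equivalent idempotents); and units lift along \(R \rar R/\Jac(R)\) (lift \(\bar a\) and \(\bar a^{-1}\) arbitrarily, their product lies in \(1 + \Jac(R)\), hence is invertible, so \(\bar a\) has an invertible lift). For the Gauss decomposition it suffices to show: for every \(g \in \glin(R)\) there is \(u \in U^+\) with \(\stmap(u)\, g \in \stmap(U^-)\,\diag(R)\,\stmap(U^+)\). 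Granting this, \(g = \stmap(u)^{-1}(\stmap(u)g)\) lies in \(\stmap(U^+)\stmap(U^-)\diag(R)\stmap(U^+)\), which equals \(\stmap(U^+)\stmap(U^-)\stmap(U^+)\diag(R)\) because \(\diag(R)\) normalizes \(\stmap(U^+)\). The set \(\stmap(U^-)\,\diag(R)\,\stmap(U^+)\) is precisely the ``big Gauss cell'' of elements admitting an \(LDU\)-decomposition, i.e.\ those whose initial corner \((e_1 + \cdots + e_k)\,g\,(e_1 + \cdots + e_k)\) is invertible in \((e_1 + \cdots + e_k)R(e_1 + \cdots + e_k)\) for every \(k\).

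To produce such a \(u\) I would work one block-row at a time. Suppose the rows \(u_1, \ldots, u_{k-1}\) of \(u\) have been chosen so that the first \(k - 1\) initial corners of the current matrix are invertible; the choice of row \(k\) (entries \(u_{kl} \in R_{kl}\) for \(l > k\), i.e.\ adding combinations of rows \(k+1, \ldots, n\) to row \(k\)) does not affect the earlier corners. Passing to the Schur complement of the invertible \((k-1)\)-st corner inside the current (invertible) matrix, a Schur-complement identity shows that adding \(\sum_{l>k} u_{kl}\,(\text{row } l)\) to row \(k\) corresponds exactly to adding \(\sum_{l>k} u_{kl}\,(\text{row } l \text{ of the Schur complement})\) to its \(k\)-th row. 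The Schur complement is invertible over the semi-local corner \((e_k + \cdots + e_n)R(e_k + \cdots + e_n)\), so its \(k\)-th block-row is unimodular, and by stable rank \(1\) I may choose the \(u_{kl}\) making its \((k,k)\)-entry a unit in \(R_{kk}\); this makes the \(k\)-th initial corner invertible. This is exactly where semi-locality enters (through the Morita-context form of stable rank \(1\)), and it is what keeps the number of unipotent factors equal to three.

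For the presentation, consider \(\theta \colon \stlin(R) \rtimes \diag(R) \rar \glin(R)\), \((\sigma, d) \mapsto \stmap(\sigma)\, d\); this is a homomorphism since \(\stmap\) is \(\diag(R)\)-equivariant. Its image contains \(\elin(R)\diag(R) = \glin(R)\) by the Gauss decomposition, so \(\theta\) is onto, and each listed relation holds in \(\glin(R)\), so \(\theta\) descends to \(p \colon \widehat G \rar \glin(R)\), where \(\widehat G\) is the quotient by the normal closure \(N\) of the relations. I must show \(p\) is injective, i.e.\ \(\ker\theta \subseteq N\); since \(\ker\theta = \{(\sigma, d) \mid \stmap(\sigma) = d^{-1} \in \diag(R) \cap \elin(R)\}\), this amounts to rewriting, modulo \(N\), any \(\sigma \in \stlin(R)\) with \(\stmap(\sigma) \in \diag(R)\) into the matching element of the (explicitly present) factor \(\diag(R)\). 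The mechanism is a Gauss normal form in \(\widehat G\) together with uniqueness of the \(LDU\)-decomposition in \(\glin(R)\): because \(\stmap\) is injective on \(U^\pm\) and \(U^- \cap \diag(R)\,\stmap(U^+) = 1\), the big cell lifts canonically to \(\widehat G\), so once a word is brought into cell form \(\overline{u^-}\,\bar d\,\overline{u^+}\) modulo \(N\), the relation \(p(\hat g)=1\) forces \(u^- = u^+ = 1\) and \(\bar d\) prescribed. The straightening is carried out by the Weyl-type elements \(w_i(u) = x_{i,i+1}(u)\,x_{i+1,i}(-u^{-1})\,x_{i,i+1}(u)\) for units \(u \in (R_{ii})^*\); their commutation with root subgroups and the identity \(w_i(u)\,w_i(-e_i) = d_i(u)\,d_{i+1}(u^{-1})\) are instances of the adjacent rank-\(2\) relations, and rewriting this product as three transvection pairs is exactly why \(N = 3\) suffices.

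The \textbf{main obstacle} is this last step: showing that Gauss reduction already holds in \(\widehat G\) using only the adjacent \((i,i+1)\) rank-\(2\) relations. Concretely, one must verify that the big cell of \(\widehat G\) is stable (modulo \(N\)) under left and right multiplication by every root subgroup, handling the cases in which the product ``falls out of the cell'' via the \(w_i\)'s and the Steinberg relations (St1)--(St3), and reducing every non-adjacent collision to adjacent ones by conjugation. I expect the cleanest way to discharge this is to reduce modulo \(\Jac(R)\): over the semi-simple ring \(R/\Jac(R)\) the coincidences \(\stmap(\sigma) \in \diag\) are governed by the classical Steinberg presentation of \(\glin_n\), which is generated by rank-\(2\) relations, while any element of \(1 + \Jac(R)\) has a unique \(LDU\)-decomposition with unit pivots and is treated directly by the uniqueness above; splicing the two along the radical filtration, together with the unit-lifting remark, yields \(\ker\theta \subseteq N\) and hence \(\widehat G \cong \glin(R)\).
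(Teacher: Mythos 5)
Your Gauss decomposition argument is correct but takes a genuinely different route from the paper's. The paper proves the \(n=2\) case by reducing modulo \(\Jac(R)\) to a matrix ring over a division ring and making the pivot \(e_2\,g\,t_{12}(a)\,e_2\) invertible by a rank count, then gets general \(n\) by induction through the passage \(\Phi \rar \Phi/(\mathrm e_{i+1}-\mathrm e_i)\). You instead run a Schur-complement induction on leading corners and outsource the pivot step to a Morita-context form of stable rank one. That fact is true for semi-local rings, but it is itself proved by the same reduction modulo the radical, so the two routes cost about the same; yours is fine provided you actually prove (or precisely cite) the Morita-context stable range statement for the mixed Peirce components \(R_{kl}\).

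The presentation half has a genuine gap, and you have located it yourself: everything hinges on showing that every element of your \(\widehat G\) can be brought to big-cell form modulo the normal closure \(N\) of the relations. Your proposed discharge --- reduce modulo \(\Jac(R)\), invoke ``the classical Steinberg presentation of \(\glin_n\)'' over the semisimple quotient, and splice along the radical filtration --- does not work as stated. The assertion that \(\glin\) of a semisimple ring, with respect to an arbitrary complete family of Morita equivalent idempotents, is presented over \(\stlin \rtimes \diag\) by exactly these adjacent rank-two relations with \(N=3\) is not a citable classical fact: it is precisely the lemma in the semisimple case, so the appeal is circular. The splicing step is also not described; the kernel of \(\stlin(R)\rtimes\diag(R) \rar \stlin(R/\Jac(R))\rtimes\diag(R/\Jac(R))\) is not visibly generated by anything the relations control. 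The paper closes this gap directly: let \(G\) be the set of elements congruent modulo the relations to big-cell form; \(G\) is closed under right multiplication by \(U^+\) and \(\diag(R)\) for free, and since \(\stlin(R)\) is generated by \(U^+\) together with the subgroups \(x_{i+1,i}(*)\) (by (St3) and Lemma \ref{RingGeneration}), it suffices to show \(G\,x_{i+1,i}(*)\subseteq G\); after passing to \(\Phi/(\mathrm e_{i+1}-\mathrm e_i)\) this becomes the rank-two containment \(t_{i,i+1}(*)\,t_{i+1,i}(*)\,t_{i,i+1}(*)\,t_{i+1,i}(*) \subseteq t_{i,i+1}(*)\,t_{i+1,i}(*)\,t_{i,i+1}(*)\,d_i(*)\,d_{i+1}(*)\) modulo the relations, which is exactly an instance of the imposed relation with \(N=3\) combined with the rank-two Gauss decomposition for \((e_i+e_{i+1})R(e_i+e_{i+1})\). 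You should replace the Weyl-element and radical-splicing sketch with this (or an equivalent) closure argument; as written, the presentation claim is unproved.
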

\begin{proof}
Let us prove Gauss decomposition for \(n = 2\) (if \(n = 1\), there is nothing to prove). Take \(g \in \glin(R)\). We claim that there is \(a \in R_{12}\) such that \(e_2 g\, t_{12}(a)\, e_2 \in (R_{22})^*\). Without loss of generality, we may factor \(R\) by its Jacobson radical and consider only one of the simple factors, i.e. we may assume that \(R\) is a matrix ring over a division ring of size \(m\). Moreover, we may assume that \(e_1 = e'_{11} + \ldots + e'_{tt}\) and \(e_2 = e'_{t + 1, t + 1} + \ldots + e'_{mm}\), where \(e'_{ij}\) are the matrix units. Now the matrix \(e_2 g\) has rank \(m - t\), so we may add first \(t\) columns of \(e_2 g\) to the last \(m - t\) columns with some coefficients making the rank of \(e_2 g\, t_{12}(a)\, e_2\) equal to \(m - t\) (the matrix \(a\) describes the coefficients of elementary transformations). Take such an \(a\). Then there is \(b \in R_{21}\) such that \(e_2 g\, t_{12}(a)\, t_{21}(b)\, e_1 = 0\), hence \(g\, t_{12}(a)\, t_{21}(b) \in t_{12}(*) \diag(R)\). The case of arbitrary \(n\) follows by easy induction if we pass from \(\Phi\) to \(\Phi / (\mathrm e_{i + 1} - \mathrm e_i)\) and then apply the case \(n = 2\) for \((e_i + e_{i + 1}) R (e_i + e_{i + 1})\).

If \((g, h) \in \stlin(R) \rtimes \diag(R)\) maps to \(1 \in \glin(R)\) and \(g\) lies in \(U^+\, U^-\, U^+\), then it is easy to see that the factor from \(U^-\) is trivial. Hence \(g = 1\) and \(h = 1\). It remains to prove that every element from \(\stlin(R) \rtimes \diag(R)\) is congruent to an element \((g, h)\) with \(g \in U^+\, U^-\, U^+\) modulo the relations. Let \(G\) be the set of such elements. Then \(G\) is closed under multiplication by \(U^+\) and \(\diag(R)\) from the right, it also contains \(1\). It suffices to show that \(G\, x_{i + 1, i}(*) \subseteq G\) because \(\stlin(R)\) is generated by \(U^+\) and all \(x_{i + 1, i}(*)\) (this easily follows from (St3) and lemma \ref{RingGeneration}). After passing from \(\Phi\) to \(\Phi / (\mathrm e_{i + 1} - \mathrm e_i)\) it remains to show that
\[t_{i, i + 1}(*)\, t_{i + 1, i}(*)\, t_{i, i + 1}(*)\, t_{i + 1, i}(*) \subseteq t_{i, i + 1}(*)\, t_{i + 1, i}(*)\, t_{i, i + 1}(*)\, d_i(*)\, d_{i + 1}(*),\]
modulo the relations. But this follows from Gauss decomposition for \((e_i + e_{i + 1}) R (e_i + e_{i + 1})\).
\end{proof}

Recall that a sequence \(a_1, \ldots, a_n\) in a unital ring \(A\) is called left unimodular if there are \(b_1, \ldots, b_n \in A\) such that \(\sum_{i = 1}^n b_i a_i = 1\). The ring \(A\) satisfies \(\mathrm{sr}(A) \leq n - 1\) if for every left unimodular sequence \(a_1, \ldots, a_n\) there are elements \(c_1, \ldots, c_{n - 1} \in A\) such that the shorter sequence \(a_1 + c_1 a_n, \ldots, a_{n - 1} + c_{n - 1} a_n\) is also unimodular. We say that a unital \(K\)-algebra satisfies \(\mathrm{lsr}(A) \leq n - 1\) if \(\mathrm{sr}(A_{\mathfrak m}) \leq n - 1\) for every maximal ideal \(\mathfrak m \leqt K\). For example, if \(A\) is a quasi-finite \(K\)-algebra, then \(\mathrm{lsr}(A) \leq 1\).

\begin{prop}\label{LinearLocalAction}
Let \(R\) be a unital \(K\)-algebra with a complete family of Morita equivalent orthogonal idempotents \(e_1, \ldots, e_n\), \(S \leq K^\bullet\) is a multiplicative subset. Suppose that either \(n \geq 4\) or \(n = 3\) and \(S = 1\). Also suppose that either \(R = \mat(n, A)\) for an algebra \(A\) with \(\mathrm{sr}(S^{-1} A) \leq n - 2\) (where \(e_i = e_{ii}\) are the matrix units) or that \(S^{-1} R\) is semi-local. Then \(\glin(S^{-1} R)\) acts on \(\stlin(R)^{(\infty)}\), the morphism \(\stmap\) is equivariant, and this action is consistent with the action of \(\stlin(S^{-1} R) \rtimes \diag(S^{-1} R)\).
\end{prop}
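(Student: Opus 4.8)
The plan is to construct, for every $g \in \glin(S^{-1}R)$, an automorphism $\Ad_g$ of the pro-group $\stlin(R)^{(\infty)}$ that lifts through $\stmap$ the conjugation automorphism $\up g{(-)}$ of $\glin(R)^{(\infty)}$, in such a way that $g \mapsto \Ad_g$ is a homomorphism restricting to the action of Proposition \ref{SteinbergLocalAction} on the image of $\stlin(S^{-1}R) \rtimes \diag(S^{-1}R)$. By Lemma \ref{SteinbergPresentation} such an automorphism is exactly the datum of a compatible family of morphisms $R_{ij}^{(\infty)} \to \stlin(R)^{(\infty)}$ satisfying (St1)--(St3), so everything reduces to lifting the conjugated root morphisms $a \mapsto \up g{t_{ij}(a)}$ and checking independence of the auxiliary choices. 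I would treat the two hypotheses separately: the semi-local branch is the model computation, and the matrix branch needs the genuine stability input.

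In the semi-local case Lemma \ref{GaussDecomposition} presents $\glin(S^{-1}R)$ as the quotient of $\stlin(S^{-1}R) \rtimes \diag(S^{-1}R)$ by the Gauss relators, i.e.\ those words $w = \prod_{k}\bigl(x_{i,i+1}(a_k)\, x_{i+1,i}(b_k)\bigr)\, d_i(u)^{-1} d_{i+1}(v)^{-1}$ that map to $1$ in $\glin(S^{-1}R)$. I would therefore define $\Ad$ through the action of Proposition \ref{SteinbergLocalAction} and show that this action descends to the quotient, i.e.\ that every such relator $w$ acts trivially on $\stlin(R)^{(\infty)}$. This descent is the heart of the argument, and it is genuinely clean.

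Fix such a relator $w$. Since $w$ maps to $1$ in $\glin(S^{-1}R)$, equivariance of $\stmap$ gives $\stmap \circ \Ad_w = \stmap$. As $w$ involves only the indices $i$ and $i+1$, for each $l \notin \{i, i+1\}$ the automorphism $\Ad_w$ preserves the abelian ``column'' subgroup $x_{li}(*)\, x_{l,i+1}(*) \cong R_{li}^{(\infty)} \times R_{l,i+1}^{(\infty)}$, on which $\stmap$ is injective; hence $\Ad_w$ is the identity there, and likewise on every ``row'' subgroup $x_{im}(*)\, x_{i+1,m}(*)$. Writing the in-plane generators as commutators $x_{i,i+1}(bc) = [x_{il}(b), x_{l,i+1}(c)]$ (valid for $n \ge 3$) and applying Lemma \ref{RingGeneration}, I conclude that $\Ad_w$ also fixes $x_{i,i+1}$ and $x_{i+1,i}$. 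Thus $\Ad_w$ fixes all generators, so $\Ad_w = \mathrm{id}$ by Lemma \ref{SteinbergPresentation}, the action of $\glin(S^{-1}R)$ is well defined, and equivariance and consistency are immediate. The case $n = 3$, $S = 1$ is covered here because the $\stlin(S^{-1}R)\rtimes\diag(S^{-1}R)$-action is then just conjugation together with the diagonal action of Section~2, which already exists for $n \ge 3$.

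In the matrix branch $S^{-1}R = \mat(n, S^{-1}A)$ need not be semi-local, and since $SK_1(S^{-1}A)$ may be nonzero the map $\stlin(S^{-1}R)\rtimes\diag(S^{-1}R) \to \glin(S^{-1}R)$ need not be surjective; so $\Ad_g$ cannot be built by factoring $g$ and must instead be produced directly. Here I would use that $\mathrm{sr}(S^{-1}A) \le n-2$ leaves at least two free coordinates, so each conjugated transvection $\up g{t_{ij}(a)}$ factors as a product of ordinary elementary transvections supported on a path through those coordinates; lifting factor by factor yields a candidate value $\Ad_g(x_{ij}(a))$, and one verifies (St1)--(St3) for the resulting morphisms after applying $\stmap$. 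The main obstacle is precisely the well-definedness of this construction --- independence of the chosen factorizations and the homomorphism property $g \mapsto \Ad_g$ --- which is exactly where the surjective and injective stability of $\klin_2$ in the stable range (equivalently, uniqueness of Steinberg lifts once $n - 1 > \mathrm{sr}(S^{-1}A)$) becomes indispensable. The pro-group localization is what turns these stable-range statements into honest equalities of pro-morphisms, and I expect this to be the technically heaviest part of the proof.
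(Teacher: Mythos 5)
Your semi-local branch is essentially sound and is a direct-verification version of what the paper does: the paper also reduces to killing the Gauss relators of Lemma \ref{GaussDecomposition}, but instead of checking generators by hand it observes that a relator supported on \(\{\pm\alpha_i\}\) and the diagonal acts on \(\stlin(R,\Phi/\alpha_i)^{(\infty)}\) through its image in \(\diag(S^{-1}R,\Phi/\alpha_i)\), which is trivial because the relator dies in \(\glin(S^{-1}R)\), and then invokes the epimorphism \(F_{\alpha_i}\) of Lemma \ref{FactorRootGeneration}. Your row/column-subgroup computation plus Lemma \ref{RingGeneration} achieves the same thing and is fine for \(n\geq 3\).

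The genuine gap is in the matrix branch. You correctly diagnose that \(\stlin(S^{-1}R)\rtimes\diag(S^{-1}R,\Phi)\rar\glin(S^{-1}R)\) need not be surjective when only \(\mathrm{sr}(S^{-1}A)\leq n-2\) is assumed (surjective \(\klin_1\)-stability only reaches the \((n-2)\times(n-2)\) corner, not the diagonal), but you then abandon the quotient strategy entirely, whereas the paper's fix is to \emph{enlarge the diagonal}: pass to the coarser idempotent family \(e_1,\ e_{2'}=e_2+\cdots+e_{n-1},\ e_n\), i.e.\ to \(\Phi'=\Phi/\{\alpha_2,\ldots,\alpha_{n-2}\}\). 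By Proposition \ref{FactorRootPresentation} the Steinberg pro-group is unchanged, the coarse diagonal \(\diag(S^{-1}R,\Phi')\) contains \(\glin(n-2,S^{-1}A)\) in its middle block, so \(G=\stlin(S^{-1}R,\Phi')\rtimes\diag(S^{-1}R,\Phi')\rar\glin(S^{-1}R)\) \emph{is} surjective by surjective \(\klin_1\)-stability, and the kernel is generated by elements supported on \(\{\pm[\alpha_1]\}\) or \(\{\pm[\alpha_{n-1}]\}\) by injective \(\klin_1\)-stability and surjective \(\klin_2\)-stability --- after which the same relator-killing argument as in the semi-local case applies. Your replacement --- factoring each \(\up g{t_{ij}(a)}\) into elementary transvections along a ``path'' and lifting factor by factor --- defers exactly the point at issue: a lift of a product of transvections to \(\stlin\) is only defined up to \(\klin_2\), and showing that different factorizations give the same element (and that \(g\mapsto\Ad_g\) is a homomorphism) is not a routine consequence of stability ``in the stable range''; it is essentially the Tulenbaev/van der Kallen ``another presentation'' machinery that this paper is expressly designed to avoid. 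As written, the matrix case is a plan with its central step unproved, and it misses the one structural idea (the three-block root system \(\Phi'\)) that makes the stability theorems do the work.
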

\begin{proof}
Let \(\alpha_i = \mathrm e_{i + 1} - \mathrm e_i\) be the simple roots of \(\Phi\) and consider the root system \(\Phi' = \Phi / \{\alpha_2, \ldots, \alpha_{n - 2}\}\) corresponding to the family of idempotents \(e_1, e_{2'} = e_2 + \ldots + e_{n - 1}, e_n\). By propositions \ref{FactorRootPresentation} and \ref{SteinbergLocalAction} the group
\[G = \stlin(S^{-1} R, \Phi') \rtimes \diag(S^{-1} R, \Phi')\]
acts on \(\stlin(R)^{(\infty)}\) and \(\stmap\) is equivariant. Note that \(G \rar \glin(S^{-1} R)\) is surjective: in the matrix case this follows from surjective stability of \(\klin_1\) (see \cite{LinStabBass}, theorem 4.2), and in the semi-local case this follows from Gauss decomposition \ref{GaussDecomposition}. It remains to show that every generator of the kernel of \(G \rar \glin(S^{-1} R)\) acts trivially.

We claim that it is possible to choose the generators to be of type
\[\bigl(\prod_{k = 1}^N (x_{[\alpha_i]}(a_k)\, x_{[-\alpha_i]}(b_k)), h\bigr)\]
for \(i = 1\) and \(i = n - 1\). In the semi-local case this follows from lemma \ref{GaussDecomposition}. In the matrix case we may consider only the generators of type \((g, d_1(u)\, d_{2'}(v))\) and \((g, d_{2'}(u)\, d_n(v))\) for \(g \in \stlin(S^{-1} R, \Phi')\) by surjective stability of \(\klin_1\). But then \(g\) is a product of \(x_{\pm[\alpha_1]}\) or \(x_{\pm[\alpha_{n - 1}]}\) by injective stability of \(\klin_1\) (\cite{LinStabVaser}) and surjective stability of \(\klin_2\) (\cite{LinStabDennis}). Finally, the proposition follows from lemma \ref{FactorRootGeneration} applied to \(\Phi' / \alpha_1\) and \(\Phi' / \alpha_{n - 1}\).
\end{proof}

\section{Steinberg crossed module}

We are ready to prove the main results. Recall the remark before lemma \ref{FactorRootGeneration} that \(\stlin(R)\) is perfect for any unital ring \(R\) with a complete family of Morita equivalent orthogonal idempotents \(e_1, \ldots, e_n\) if \(n \geq 3\).

\begin{theorem}\label{SemilocalCrossedModule}
Let \(R\) be a semi-local unital ring with a complete family of Morita equivalent orthogonal idempotents \(e_1, \ldots, e_n\). Suppose that \(n \geq 3\). Then there is unique action of \(\glin(R)\) on \(\stlin(R)\) making \(\stmap \colon \stlin(R) \rar \glin(R)\) a crossed module, it is consistent with the action of \(\stlin(R) \rtimes \diag(R)\).
\end{theorem}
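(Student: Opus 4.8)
The plan is to obtain the action as the \(S = \{1\}\) specialization of Proposition \ref{LinearLocalAction} and then to promote the resulting precrossed module to a crossed module by a soft argument that exploits perfectness of \(\stlin(R)\), avoiding any root-by-root computation. First I would regard \(R\) as a \(\mathbb Z\)-algebra and take the trivial multiplicative set \(S = \{1\}\), so that \(S^{-1} R = R\) and the index category \(\mathcal S\) has a single object. Then the pro-group \(\stlin(R)^{(\infty)}\) collapses to the single homotope \(\stlin(R)^{(1)}\), and the latter is canonically isomorphic to \(\stlin(R)\) via \(x_{ij}^{(1)}(a) \leftrightarrow x_{ij}(a)\): at \(s = 1\) the homotope relation \([x_{ij}^{(1)}(a), x_{jk}^{(1)}(b)] = x_{ik}^{(1)}(ab)\) is exactly (St3). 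Since \(R\) is semi-local and \(n \geq 3\), the hypotheses of Proposition \ref{LinearLocalAction} hold (the case \(n = 3\), \(S = 1\) is permitted, as is \(n \geq 4\)), so \(\glin(R) = \glin(S^{-1} R)\) acts on \(\stlin(R)\) by automorphisms, \(\stmap\) is equivariant, and the action is consistent with the action of \(\stlin(R) \rtimes \diag(R)\) of Proposition \ref{SteinbergLocalAction}. Because \(\mathcal S\) has a single index here, this is an honest group action rather than merely a morphism in \(\Pro(\Group)\).

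Equivariance of \(\stmap\) is precisely \(\stmap(\up g{h'}) = \up g{\stmap(h')}\), so \((\stmap, \up{}{})\) is already a precrossed module; the remaining task is the Peiffer identity \(\up{\stmap(h)}{h'} = \up h{h'}\), where the right-hand side is conjugation inside \(\stlin(R)\). I would prove this without touching the explicit formulas. Fix \(h\) and consider the two automorphisms \(\theta(h') = \up{\stmap(h)}{h'}\) (the action) and \(c_h(h') = \up h{h'}\). Applying \(\stmap\) to either yields the same element \(\up{\stmap(h)}{\stmap(h')}\) of \(\glin(R)\): for \(\theta\) this is equivariance, for \(c_h\) it is that \(\stmap\) is a homomorphism. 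Hence \(\theta(h')\) and \(c_h(h')\) differ by a factor in \(\klin_2(R) = \ker \stmap\), which is central in \(\stlin(R)\). Writing any commutator's images as \(\theta(a) = c_h(a)\, z_a\) and \(\theta(b) = c_h(b)\, z_b\) with \(z_a, z_b\) central, the automorphism property gives \(\theta([a,b]) = [\theta(a), \theta(b)] = [c_h(a), c_h(b)] = c_h([a,b])\), the central corrections cancelling inside the commutator. Since \(\stlin(R)\) is perfect for \(n \geq 3\), every element is a product of commutators, so \(\theta = c_h\), which is the Peiffer identity and makes \(\stmap\) a crossed module.

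Uniqueness follows by the same central-cancellation trick. If \(\up{}{}_1\) and \(\up{}{}_2\) are two actions making \(\stmap\) a crossed module, then for a fixed \(g \in \glin(R)\) the automorphisms \(\up g{\cdot}_1\) and \(\up g{\cdot}_2\) both map to \(\up g{\stmap(\cdot)}\) after composing with \(\stmap\), so they differ by a central \(\klin_2(R)\)-valued correction which, by the argument above, vanishes on every commutator; perfectness of \(\stlin(R)\) then forces \(\up g{\cdot}_1 = \up g{\cdot}_2\) for all \(g\). The main obstacle is thus located entirely in Proposition \ref{LinearLocalAction}, which is what supplies the globalized action together with its equivariance; once that is granted, both the crossed-module axiom and uniqueness are purely formal consequences of perfectness, and the consistency with \(\stlin(R) \rtimes \diag(R)\) is inherited directly from the cited proposition.
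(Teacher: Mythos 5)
Your proposal is essentially the paper's own proof: apply Proposition \ref{LinearLocalAction} with \(K = \mathbb Z\) and \(S = \{1\}\), observe that the pro-group collapses to \(\stlin(R)\) itself, and deduce uniqueness from the fact that \(\stlin(R) \rar \elin(R)\) is a central perfect extension. The one place where you deviate is the Peiffer identity, and there your argument is both redundant and, as written, slightly circular. The consistency clause of Proposition \ref{LinearLocalAction} already says that for \(h \in \stlin(R)\) the automorphism \(\Ad_{\stmap(h)}\) \emph{is} the action of \(h\) coming from \(\stlin(R) \rtimes \diag(R)\), which at \(S = \{1\}\) is literally conjugation by \(h\); so the Peiffer identity is immediate and no central-correction computation is needed. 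Conversely, your standalone derivation of the Peiffer identity rests on the assertion that \(\klin_2(R)\) is central in \(\stlin(R)\), which you do not justify; note that trying to prove that centrality by the same trick (comparing \(c_h\) with the identity for \(h \in \ker\stmap\)) already requires centrality, so it is not free. The clean order of deductions, which is the paper's, is: consistency gives \(\up{\stmap(h)}{h'} = \up h{h'}\) outright; specializing to \(h \in \ker\stmap\) gives centrality of \(\klin_2(R)\); centrality plus perfectness (\(n \geq 3\)) gives uniqueness of each \(\Ad_g\). With that reordering your write-up is correct and buys nothing different from the paper.
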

\begin{proof}
Indeed, by proposition \ref{LinearLocalAction} (applied to \(K = \mathbb Z\) and \(S = \{1\}\)) there is an action of \(\glin(R)\) on \(\stlin(R)\) such that \(\stmap\) is equivariant and this action is consistent with the conjugacy action of \(\stlin(R)\) on itself. Hence, in particular, \(\stlin(R)\) is a central perfect extension of \(\elin(R)\). By abstract group theory it follows that for every \(g \in \glin(R)\) there is at most one automorphism \(\Ad_g\) of \(\stlin(R)\) making \(\stmap\) \(g\)-equivariant.
\end{proof}

\begin{theorem}\label{QuasifiniteCrossedModule}
Let \(K\) be a commutative ring, \(R\) be a unital \(K\)-algebra with a complete family of Morita equivalent orthogonal idempotents \(e_1, \ldots, e_n\) for \(n \geq 4\). Suppose that either \(R = \mat(n, A)\) for an algebra \(A\) with \(\mathrm{lsr}(A) \leq n - 2\) (and \(e_i = e_{ii}\) are the matrix units) or \(R\) is quasi-finite. Then there is unique action of \(\glin(R)\) on \(\stlin(R)\) making \(\stmap \colon \stlin(R) \rar \glin(R)\) a crossed module, it is consistent with the action of \(\stlin(R) \rtimes \diag(R)\).
\end{theorem}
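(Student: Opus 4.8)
The plan is to reduce the global statement over the ring $R$ to the local statements already packaged in Proposition \ref{LinearLocalAction} by a localization argument, and then to assemble the local actions into a single genuine action of $\glin(R)$ on $\stlin(R)$ using the pro-group machinery. The key observation is that $\stlin(R)^{(\infty)}$ with $S = \{1\}$ degenerates to $\stlin(R)$ itself (the index category $\mathcal S$ collapses), so an action of $\glin(S^{-1}R)$ on the pro-group $\stlin(R)^{(\infty)}$ furnished by Proposition \ref{LinearLocalAction} specializes, upon choosing $S$ appropriately, to an action on the ordinary Steinberg group.

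First I would establish the uniqueness part exactly as in Theorem \ref{SemilocalCrossedModule}: since $n \geq 4 \geq 3$, the group $\stlin(R)$ is perfect, and the restriction of any crossed-module action to $\elin(R) \leq \glin(R)$ must agree with the conjugation action transported through $\stmap$; because $\stmap \colon \stlin(R) \rar \elin(R)$ is a central perfect extension, for each $g \in \glin(R)$ there is at most one automorphism $\Ad_g$ making $\stmap$ $g$-equivariant. Thus it suffices to produce the action and verify the crossed-module axioms.

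For existence, the hard part is that Proposition \ref{LinearLocalAction} only gives, for each maximal ideal $\mathfrak m \leqt K$, an action of the \emph{local} group $\glin(R_{\mathfrak m})$ (via a suitable multiplicative set $S = K \setminus \mathfrak m$) on the pro-group $\stlin(R)^{(\infty)}$; I must descend these local actions to a single action of the global $\glin(R)$ on $\stlin(R)$. The mechanism is that a morphism $g \in \glin(R) \rar \glin(R_{\mathfrak m})$ over every $\mathfrak m$ determines, through the $\mathfrak m$-local action and the collapse of $S = \{1\}$, a compatible family of automorphisms; the quasi-finite (resp.\ $\mathrm{lsr}(A) \leq n-2$) hypothesis is precisely what guarantees $\mathrm{sr}(S^{-1}A) \leq n-2$ (resp.\ $S^{-1}R$ semi-local) at each $\mathfrak m$, so that Proposition \ref{LinearLocalAction} applies locally. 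I expect the main obstacle to be the gluing: one must check that the locally defined automorphisms $\Ad_g$ agree on overlaps and are induced by the single global element $g$, which I would handle by exploiting that the assignment $g \mapsto \Ad_g$ is already pinned down on the dense elementary part $\elin(R)$ by conjugation, and that $\stlin(R)$ is generated by root subgroups on which the local formulas from Proposition \ref{SteinbergLocalAction} patch via Lemma \ref{RingLocalAction} (the maps $\mathrm L_u, \mathrm R_v$ are defined over $S^{-1}R_{ii}$ and become globally defined in the limit $S = \{1\}$).

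Finally I would verify the crossed-module axioms. The precrossed identity $\stmap(\up g h) = \up g{\stmap(h)}$ is the equivariance of $\stmap$ recorded in Proposition \ref{LinearLocalAction}. The Peiffer identity $\up{\stmap(h)}{h'} = \up h{h'}$ reduces, by perfectness and the fact that both sides are automorphisms agreeing on root subgroups, to checking it on generators: this follows because the consistency of the global action with the $\stlin(R) \rtimes \diag(R)$-action (asserted in Proposition \ref{LinearLocalAction}) means that $\Ad_{\stmap(h)}$ and conjugation by $h$ are induced by the same data on the pro-group, hence coincide after specialization. Consequently $\stmap \colon \stlin(R) \rar \glin(R)$ is a crossed module, and by the kernel/image remarks in Section 2 this yields centrality of $\klin_2(R)$ as a corollary.
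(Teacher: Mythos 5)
There is a genuine gap, and it sits exactly at the point you yourself flag as ``the main obstacle.'' Your stated key observation --- that taking \(S = \{1\}\) collapses \(\stlin(R)^{(\infty)}\) to \(\stlin(R)\) so that Proposition \ref{LinearLocalAction} hands you an action of \(\glin(R)\) on \(\stlin(R)\) --- is not available under the hypotheses of this theorem. Proposition \ref{LinearLocalAction} with \(S=\{1\}\) requires \(\mathrm{sr}(A) \leq n-2\) or \(R\) semi-local, whereas the theorem only assumes the \emph{local} conditions \(\mathrm{lsr}(A)\leq n-2\) or quasi-finiteness; these become usable only after localizing at a maximal ideal \(\mathfrak m \leqt K\), i.e.\ with \(S = K\setminus\mathfrak m\). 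For such \(S\) the proposition produces an action of \(\glin(S^{-1}R)\) on the honest pro-group \(\stlin(R)^{(\infty)}\), which does \emph{not} specialize to an automorphism of \(\stlin(R)\): there is no evaluation of a pro-group automorphism at a single index, and no gluing of the \(\mathfrak m\)-local actions is performed (or, as far as I can see, possible) along the lines you sketch. Saying that \(\Ad_g\) is ``pinned down on \(\elin(R)\)'' does not produce \(\Ad_g\) for a general \(g\in\glin(R)\).

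The paper's actual mechanism is different and you should compare it with your plan. The local pro-group actions are used only to verify \emph{identities}: for a desired relation one forms the ideal of all \(k\in K\) for which the relation holds on \(k\)-multiples (e.g.\ \(\mathfrak a = \{k \mid [g, x_{ij}(kR_{ij})]=1\}\)) and shows it is contained in no maximal ideal, hence equals \(K\). This is first used to prove that \(\klin_2(R)\) is central in \(\stlin(R)\) and that \(\elin(R)\) is normal in \(\glin(R)\) --- two preliminary steps absent from your proposal but indispensable: normality makes the cosets \(Y_{ij}(a) = \stmap^{-1}(\up g{t_{ij}(a)})\) nonempty, and centrality makes the commutators \([Y_{ij}(a), Y_{kl}(b)]\) single elements. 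The automorphism \(\Ad_g\) is then constructed directly from these cosets: one more ideal argument shows \([Y_{ij}(a), Y_{kl}(b)]=1\) for non-interfering roots, the elements \(y_{ijk}(a\otimes b) = [Y_{ij}(a), Y_{jk}(b)]\) are shown to be biadditive and to satisfy the associativity identity, and Lemmas \ref{RingGeneration} and \ref{RingPresentation} (Morita equivalence, \(n\geq 4\)) turn them into well-defined maps \(y_{ik}\colon R_{ik}\rar\stlin(R)\) satisfying the Steinberg relations, whence \(\Ad_g(x_{ij}(a)) = y_{ij}(a)\). Your uniqueness argument and the derivation of the Peiffer identity from uniqueness are fine, but without the coset construction and the ideal-localization trick the existence part of your proof does not go through.
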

\begin{proof}
Firstly, we show any \(g \in \klin_2(R)\) lies in the center of \(\stlin(R)\). Fix two indices \(i \neq j\) and consider the ideal
\[\mathfrak a = \{k \in K \mid [g, x_{ij}(kR_{ij})] = 1\} \leqt K.\]
It suffices to show that \(\mathfrak a\) is not contained in any maximal ideal \(\mathfrak m \leqt K\). By proposition \ref{LinearLocalAction} the element \(\Psi(g)\) trivially acts on the pro-group \(\stlin(R)^{(\infty)}\), where the multiplicative subset is \(S = K \setminus \mathfrak m\) and \(\Psi\) is the localization map (the quasi-finite case easily reduces to the case of a finite \(K\)-algebra \(R\), so after localization it becomes a semi-local ring). This means that \(\mathfrak a\) contains an element from \(K \setminus \mathfrak m\).

Secondly, we show that \(\elin(R)\) is normal in \(\glin(R)\) (this result is already known at least for matrix rings). Fix \(g \in \glin(R)\) and indices \(i \neq j\). Consider the ideal
\[\mathfrak b = \{k \in K \mid \up g{t_{ij}(kR_{ij})} \in \elin(R)\} \leqt K.\]
We have to prove that \(\mathfrak b\) is not contained in any maximal ideal \(\mathfrak m \leqt K\). By proposition \ref{LinearLocalAction} the element \(\Psi(g)\) acts on the pro-group \(\stlin(R)^{(\infty)}\) such that \(\stmap\) is \(\Psi(g)\)-equivariant, where \(\Psi\) is the localization map with respect to the multiplicative subset \(S = K \setminus \mathfrak m\). Hence \(\mathfrak b\) contains an element from \(K \setminus \mathfrak m\).

Thirdly, we have to construct the action of \(\glin(R)\) on \(\stlin(R)\). Fix \(g \in \glin(R)\). There is at most one endomorphism \(\Ad_g\) of \(\stlin(R)\) such that \(\stmap\) is \(g\)-equivariant, because the extension \(\stlin(R) \rar \elin(R)\) is central and perfect. If we show its existence, then \(\Ad_{gh} = \Ad_g \circ \Ad_h\) by uniqueness. Let \(Y_{ij}(a) = \stmap^{-1}(\up g{t_{ij}(a)})\), these objects are certain cosets of \(\stlin(R)\) by \(\klin_2(R)\). They satisfy the Steinberg relations as follows:
\begin{enumerate}[label = (St\arabic*)]
\item \(Y_{ij}(a)\, Y_{ij}(b) = Y_{ij}(a + b)\);
\item \([Y_{ij}(a), Y_{kl}(b)] \subseteq \klin_2(R)\) for \(j \neq k\) and \(i \neq l\);
\item \([Y_{ij}(a), Y_{jk}(b)] \subseteq Y_{ik}(ab)\) for \(i \neq k\).
\end{enumerate}
Since \(\stlin(R) \rar \elin(R)\) is a central extension, all commutators \([Y_{ij}(a), Y_{kl}(b)]\) are one-element sets. We identify them with their elements. Let us show that actually \([Y_{ij}(a), Y_{kl}(b)] = 1\) for \(j \neq k\) and \(i \neq l\). Fix the indices \(i, j, k, l\) and an element \(a \in R_{ij}\). Consider the ideal
\[\mathfrak c = \{k \in K \mid [Y_{ij}(a), Y_{kl}(kR_{kl})] = 1\} \leqt K.\]
We show that \(\mathfrak c\) is not contained in any maximal ideal \(\mathfrak m \leqt K\). By proposition \ref{LinearLocalAction}, the element \(\Psi(\up g{t_{ij}(a)}\, g) = \Psi(g)\, \Psi(t_{ij}(a))\) acts on \(\stlin(R)^{(\infty)}\) and \(\Psi(t_{ij}(a))\) acts as in lemma \ref{SteinbergLocalAction}. It follows that \([Y_{ij}(a), Y_{kl}(kR_{kl})] = 1\) for some \(k \in K \setminus \mathfrak c\).

Now let \(y_{ijk}(a \otimes b) = [Y_{ij}(a), Y_{jk}(b)] \in Y_{ik}(ab)\) for \(a \in R_{ij}\) and \(b \in R_{jk}\) if \(i, j, k\) are distinct. The elements \(y_{ijk}(a \otimes b) \in \stlin(R)\) are biadditive:
\begin{align*}
y_{ijk}((a + a') \otimes b) &= [Y_{ij}(a + a'), Y_{jk}(b)]\\
&= [Y_{ij}(a) Y_{ij}(a'), Y_{jk}(b)]\\
&= [Y_{ij}(a), Y_{jk}(b)]\, [Y_{ij}(a'), Y_{jk}(b)]\\
&= y_{ijk}(a \otimes b)\, y_{ijk}(a' \otimes b),
\end{align*}
and similarly for the second variable.

Also for different \(i, j, k, l\) and for all \(a \in R_{ij}\), \(b \in R_{jk}\), \(c \in R_{kl}\) we have
\begin{align*}
y_{ijl}(a \otimes bc) &= [Y_{ij}(a), [Y_{jk}(b), Y_{kl}(c)]]\\
&= [[Y_{ij}(a), Y_{jk}(b)], Y_{kl}(c)\, Y_{jl}(bc)]\\
&= y_{ikl}(ab \otimes c).
\end{align*}
Since \(n \geq 4\), it follows that \(y_{ijk}(ab \otimes c) = y_{ijk}(a \otimes bc)\) for \(a \in R_{ij}\), \(b \in R_{jj}\), \(c \in R_{jk}\) by Morita equivalence of \(e_j\) and \(e_l\) for some new index \(l\) (see lemma \ref{RingGeneration} in the case \(S = \{1\}\)). For all \(i \neq k\) choose an index \(j\) different from \(i, k\) and define \(y_{ik} \colon R_{ik} \rar \stlin(R)\) by \(y_{ik}(ab) = y_{ijk}(a \otimes b)\), this is well-defined by lemma \ref{RingPresentation}. The relations (St1) and (St2) for \(y_{ij}\) are trivial, the relation (St3) follows by Morita equivalence (or lemma \ref{RingGeneration}). Let \(\Ad_g(x_{ij}(a)) = y_{ij}(a)\), this is the endomorphism of \(\stlin(R)\) making \(\stmap\) \(g\)-invariant.

Finally, note that the action \(\Ad\) of \(\glin(R)\) on \(\stlin(R)\) is consistent with the action of \(\stlin(R) \rtimes \diag(R)\) by uniqueness. Hence \(\stmap\) is a crossed module.
\end{proof}

It is known that \(\stlin(R)\) is centrally closed for any unital ring \(R\) with a complete family of Morita equivalent orthogonal idempotents \(e_1, \ldots, e_n\) for \(n \geq 5\). The proof in the matrix case is written, for example, in \cite{Milnor}, theorem 5.10 and it actually works in the general case. Hence under the assumptions of theorem \ref{QuasifiniteCrossedModule} the group \(\stlin(R)\) is the universal central extension of \(\elin(R)\) for \(n \geq 5\).

\bibliographystyle{plain}  
\bibliography{references}

\end{document}